\newcommand*{\arxiv}[1]{\href{http://www.arxiv.org/abs/#1}{arXiv: #1}}
\renewcommand{\PrintDOI}[1]{\href{http://dx.doi.org/\detokenize{#1}}{doi: \detokenize{#1}}%
  \IfEmptyBibField{pages}{, (to appear in print)}{}}
\def\commutatif{\ar@{}[rd]|{\circlearrowleft}}
\newcommand{\eq}[1][r]
   {\ar@<-3pt>@{-}[#1]
    \ar@<-1pt>@{}[#1]|<{}="gauche"
    \ar@<+0pt>@{}[#1]|-{}="milieu"
    \ar@<+1pt>@{}[#1]|>{}="droite"
    \ar@/^2pt/@{-}"gauche";"milieu"
    \ar@/_2pt/@{-}"milieu";"droite"}
\def\dar[#1]{\ar@<2pt>[#1]\ar@<-2pt>[#1]}
\newcommand{\bigon}[4][r]{
    \ar@/^1pc/[#1]^{#2}_*=<0.3pt>{}="HAUT"
    \ar@/_1pc/[#1]_{#3}^*=<0.3pt>{}="BAS"
    \ar@{=>} "HAUT";"BAS" ^{#4}
  }
\newcommand{\bigons}[6][r]{  
    \ar@/^2pc/[#1]^{#2}_*=<0.3pt>{}="HAUT"
    \ar@{}    [#1]     ^*=<0.3pt>{}="MILIEUHAUT"
                       _*=<0.3pt>{}="MILIEUBAS"
    \ar[#1]_(0.3){#3}                  
    \ar@/_2pc/[#1]_{#4}^*=<0.3pt>{}="BAS"
    \ar@{=>} "HAUT";"MILIEUHAUT" ^{#5}
    \ar@{=>} "MILIEUBAS";"BAS" ^{#6}
  }
\newtheorem{thm}{Theorem}[section]
\newtheorem{pro}[thm]{Proposition}
\newtheorem{lem}[thm]{Lemma}
\newtheorem{cor}[thm]{Corollary}
\theoremstyle{definition}
\newtheorem{df}[thm]{Definition}
\theoremstyle{remark}
\newtheorem{rmk}[thm]{Remark}
\newtheorem{ex}[thm]{Example}
\newcommand\id{1}
\newcommand{\Id}{\operatorname{id}}
\newcommand{\R}{\mathbb{R}}
\newcommand\rTo{\longrightarrow}
\newcommand\mto{\longmapsto}
\newcommand\rRack{\triangleleft}
\let\fr\mathfrak
\let\cal\mathcal
\let\bb\mathbb
\def\a{\alpha}
\def\b{\beta}
\def\g{\gamma}
\def\d{\delta}
\def\s{\sigma}
\def\vp{\varphi}
\title{Finitely stable racks and rack representations}
\author{Mohamed Elhamdadi} 
\address{Department of Mathematics, 
University of South Florida, Tampa, FL 33620, U.S.A.} 
\email{emohamed@math.usf.edu} 
\author{El-ka\"ioum M. Moutuou} 
\address{Department of Mathematics, 
University of South Florida, Tampa, FL 33620, U.S.A.} 
\email{elkaioum@moutuou.net}
\begin{document}

\maketitle

\begin{abstract}
We define a new class of racks, called finitely stable racks, which, to some extent, share various flavors with Abelian groups. Characterization of finitely stable Alexander quandles is established. Further, we study twisted rack dynamical systems, construct their cross-products, and introduce representation theory of racks and quandles. We prove several results on the {\em strong} representations of finite connected involutive racks analogous to the properties of finite Abelian groups. Finally, we define the {\em Pontryagin} dual of a rack as an Abelian group which, in the finite involutive connected case, coincides with the set of its strong irreducible representations.
\end{abstract}

\tableofcontents

\section{Introduction}

{\em Racks and quandles} were introduced independently in the 1980s by Joyce and Matveev~\cites{Joyce:Thesis, Matveev} mainly in order to provide algebraic machineries in constructing invariants of knots and links. Racks are sets equipped with non-associative structures which, to some extent, share much with the theory of groups, Hopf algebras, Lie algebras, etc. More precisely, a {\em rack} is a set $X$ equipped  with a binary operation $$X\times X\ni (x,y)\mto x\rRack y \in X$$ which is bijective with respect to the left variable and satisfying 
\[
(x\rRack y)\rRack z=(x\rRack z)\rRack (y\rRack z)
\]
for all $x,y,z\in X$. If in addition $x\rRack x= x$ for each $x\in X$, then $(X,\rRack)$ is called a {\em quandle}. A rack or a quandle $X$ is {\em trivial} if $x\rRack y=x$ for all $x,y\in X$. Unless otherwise specified, all our racks and quandles will be assumed nontrivial. The most natural quandles are the {\em conjugation} and the {\em core} quandles of a group. Specifically, let $G$ be a group. Its conjugation quandle $Conj(G)$ is the set $G$ together with the operation $g\rRack h:=hgh^{-1}$, while its {\em core quandle} $Core(G)$ is the set $G$ equipped with the operation $g\rRack h:=hg^{-1}h$. We refer to \cite{Joyce:Thesis, Matveev,Elhamdadi-Nelson:Quandles_An_Introduction} for more details about racks and quandles.

In this paper we treat racks and quandles purely as algebraic objects on their own right rather than by their connections with knot theory. We introduce and investigate several new notions and algebraic properties in the category of racks. In~\cite{Elhamdadi-Moutuou:Foundations_Topological_Racks} we defined a {\em stabilizer} in a rack $X$ to be an element $u\in X$ such that $x\rRack u = x$ for all $x\in X$. For example, given a group $G$, the stabilizers of Conj$(G)$ are exactly the elements of the center $Z(G)$ of $G$. On the other hand, the associated {\em core quandle} of $G$ has no stabilizers if $G$ has no non-trivial $2$--torsions. This observation suggests that the property of having stabilizers is too strong to capture the identity and center of a group in the category of racks and quandles. We have weakened this property by introducing the notion of {\em stabilizing families} in a rack. A finite subset $\{u_1, \ldots, u_n\}$ of a rack $X$ is a {\em stabilizing family of order $n$}  for $X$ if $(\cdots (x\rRack u_1)\rRack \cdots )\rRack u_n = x$, for all $x\in X$. If such a family exists, $X$ is said to be {\em finitely stable} or $n$--{\em stable}. The (possibly empty) set of all stabilizing families of order $n$ is denoted by $\cal S^n(X)$. Let, for instance, $\{x_1,\ldots, x_k\}$ be a finite subset of a group $G$. Then we get a stabilizing family of order $2k$ for $Core(G)$ by duplicating each element in the first set; {\em i.e.}, $\{x_1, x_1, x_2, x_2, \ldots, x_k, x_k\}$ is a stabilizing family of order $2k$. It follows that $Core(G)$ is $2k$--stable, for all $k\leq |G|$ (cf. Proposition~\ref{pro:Core-2k}). However, for the odd case, we have the following (cf.Theorem~\ref{thm:Core-odd}) result 

\medskip
\noindent {\em Let $G$ be a group. Then $Core(G)$ is $(2k+1)$--stable if and only if $G$ is isomorphic to a direct sum $\oplus_I\bb Z_2$ of the cyclic group of order $2$.}
\\

We have paid attention to the interesting class of {\em Alexander quandles} and established a general criterion for them to be finitely stable. Recall~\cites{Joyce:Thesis, And-Grana} that given a group $\Gamma$ and a $\bb Z[\Gamma]$--module $M$, each $\g\in \Gamma$ provides $M$ with the quandle structure $\rRack^\g$ defined by $x\rRack^\g y:= (x-y)\cdot \g + y, \ x,y\in M$. For all non-negative integer $n$ and each $\g\in \Gamma$, let $F_\g: M^n \rTo M$ be the function given by $F_\g(x_1,\ldots, x_n):= \sum_{i=1}^nx_i\cdot \g^{n-i}$. Then we have following result (Theorem~\ref{thm:Alexander})\\

\noindent {\em The quandle $(M, \rRack^\g)$ is $n$--stable if and only if $\g$ is an $n$--torsion. Furthermore, $\cal S^n(M)$ is the abelian group of all solutions of the equation $$F_\g(x_1,\ldots, x_n)=0.$$}

We have extended the notion of dynamical cocycles introduced in~\cite{And-Grana} to that of {\em twisted rack dynamical systems}, which are triples $(Q,X,\partial)$ where $X$ and $Q$ are racks, together with an action by rack automorphisms of $X$ on $Q$, and a family $\{\partial_{x,y}\}_{X \times X}$ of functions $Q\times Q\rTo Q$ satisfying some compatibily conditions with respect to the rack structure of $X$. When the maps $\partial_{x,y}$ happen to be rack structures on $Q$, we say that $(Q,\partial_{x,y})_X$ is an {\em $X$--bundle of racks}. In particular, for a given group $G$, $G$--{\em families of quandles} studied in~~\cites{Ishii:G-family_Quandles, Nosaka:Quandles_Cocycles} are special cases of bundles of racks. Associated to a twisted rack dynamical system $(Q,X,\partial)$ there is a rack $Q\rtimes_\partial X$, its {\em cross--product}, which is the Cartesian product $Q\times X$ equipped with the operation $(p,x)\rRack_\partial (q,y):=(\partial_{x,y}(p,q),x\rRack y)$. \\

Further, elements of rack representation theory are introduced and general properties are established. Specifically, we showed the analogues of the Schur's lemma for racks and quandles, and proved that (cf. Theorem~\ref{thm:strong_rep_connected}) \\

\noindent {\em Every strong irreducible representation of a finite connected involutive rack is one--dimensional.} \\

Finally, an analogous notion of the Pontryagin dual is defined for racks. The Pontryagin dual of a rack $X$, denoted by $D_qX$, is an Abelian group; in fact it is a finite copies of the unitary group $U(1)$. \\

The article is organized as follows. In section~\ref{fsr}, we introduce the concept of finitely stable racks which presents itself as an appropriate analogue of "Abelianity" in the category of racks and quandles. We show the existence of non-trivial stabilizing families for the core quandle $Core(G)$ and characterize stabilizing families for the $Cong_{\phi}$ quandle where $\phi$ is an automorphism of $G$. In section~\ref{SAQ}, we give necessary and sufficient conditions for Alexander quandles to be finitely stable, and provide a general algorithm to construct stabilizing families.  We introduce and study in Section~\ref{npivot} the $n$-pivot of a group as a generalization of the $n$-core defined by Joyce, {\em approximate units} in section ~\ref{rackaction}, twisted rack dynamical systems and their cross-products in Section~\ref{twist}, and $X$--bundles in section~\ref{Xbundle}. Section~\ref{rackrep} presents 
elements of representation theory of racks and quandles. Section~\ref{strong_rep} is mainly devoted to strong irreducible representations of finite connected involutive racks, and in Section~\ref{duality} we define characters of a rack and duality.


\section{Finitely stable racks}\label{fsr}

Throughout, we will use the following notations: if $u_1,..., u_n$ are elements in the rack $X$, then for $x\in X$ we will write 
\[
x\rRack (u_i)_{i=1}^n := (\cdots (x\rRack u_1)\rRack \cdots )\rRack u_n.
\] 
In particular, if $u_i=u$ for all $i=1,...,n$, then we will write $x\rRack^nu$ for $x\rRack(u)_{i=1}^n$.

Recall from~\cite{Elhamdadi-Moutuou:Foundations_Topological_Racks} that a \emph{stabilizer} in the rack $X$ is an element $u\in X$ such that $x\rRack u = x$ for all $x\in X$. This notion can be generalised as follow.

\begin{df}
Let $X$ be a rack or quandle.
\begin{enumerate}
\item A \emph{stabilizing family of order} $n$ for $X$ is a finite subset $\{u_i, i=1, \ldots, n\}$ of $X$ such that 
\[
x\rRack (u_i)_{i=1}^n = x,
\]
for all $x\in X$. In other words, $R_{u_n}R_{u_{n-1}}\cdots R_{u_1} = Id_X$.
\item An $n$--{\em stabilzer} of $X$ is a stabilizing family of order $n$ of the type $\{u\}_{i=1}^n$.
\item For any positive integer $n$, we define the \emph{$n$--center} of $X$, denoted by $\cal S^n(X)$, to be the (possibly empty) collection of all stabilizing families of order $n$ for $X$. 
\item The collection $$\cal S(X):= \bigcup_{n\in \bb N}\cal S^n(X)$$ of all stabilizing families for $X$ is called the \emph{center} of the rack $X$.
\end{enumerate}
\end{df}

\begin{lem}
Let $X$ be a rack and $\{u_i\}_{i=1}^n\in \cal S^n(X)$. Let $\s$ be an element in the symmetric group $\fr S_n$. Then $\{u_{\s(i)}\}_{i=1}^n\in \cal S^n(X)$ if $\s$ is an element of the cyclic group of order $n$ generated by the permutation $\left.\begin{pmatrix}
  2 & \ldots & n & 1
\end{pmatrix}
\right.
.$
\end{lem}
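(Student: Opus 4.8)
The plan is to translate the stabilizing condition into the language of the right-translation operators $R_u\colon X\to X$, $R_u(x):=x\rRack u$, and to exploit that these are bijections. By definition, $\{u_i\}_{i=1}^n\in\cal S^n(X)$ is equivalent to the operator identity
\[
R_{u_n} R_{u_{n-1}} \cdots R_{u_1} = \operatorname{Id}_X .
\]
Since $X$ is a rack, each $R_u$ is invertible, so all the factors above lie in the group of bijections of $X$. The whole statement then rests on the elementary fact that a cyclic rotation of a product equal to the identity is again the identity: if $QP=\operatorname{Id}_X$ with $P,Q$ invertible, then $PQ=\operatorname{Id}_X$.

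First I would reduce to the generator. Writing $c$ for the $n$-cycle $i\mapsto i+1 \pmod n$ that generates the cyclic group in question, every element is a power $c^k$. Reindexing the family $\{u_i\}$ by $c$ produces $\{u_{c(i)}\}$, and reindexing that family again by $c$ produces $\{u_{c^2(i)}\}$; hence it suffices to establish the case $\sigma=c$ and then iterate. An induction on $k$ upgrades the single-shift case to every $c^k$, while $c^0=\operatorname{id}$ is trivial.

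For the single shift, the reindexed family is $\{u_{c(i)}\}_{i=1}^n=\{u_2,u_3,\ldots,u_n,u_1\}$, whose associated operator word is $R_{u_1} R_{u_n}\cdots R_{u_2}$ (the factor $R_{u_{c(n)}}=R_{u_1}$ lands on the left). Setting $P:=R_{u_1}$ and $Q:=R_{u_n}\cdots R_{u_2}$, the hypothesis reads $QP=\operatorname{Id}_X$ and the desired conclusion is precisely $PQ=\operatorname{Id}_X$. This follows by conjugating the hypothesis by $R_{u_1}$,
\[
R_{u_1}\bigl(R_{u_n}\cdots R_{u_2} R_{u_1}\bigr) R_{u_1}^{-1} = R_{u_1} R_{u_n}\cdots R_{u_2},
\]
the left-hand side being a conjugate of $\operatorname{Id}_X$, hence equal to $\operatorname{Id}_X$.

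The only point requiring care — and the nearest thing to an obstacle — is the index bookkeeping identifying reindexing by $c^k$ with a genuine cyclic rotation of the operator word; once this is in place, invertibility of the $R_{u_i}$ does all the work. I would also flag that the hypothesis $\sigma\in\langle c\rangle$ cannot be dropped: for a general permutation the factors $R_{u_i}$ need not commute, so only cyclic rotations are guaranteed to preserve the identity.
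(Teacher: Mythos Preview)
Your argument is correct and is precisely the kind of reasoning the paper has in mind: the paper's own proof is simply the word ``Straightforward,'' and your translation into the operator identity $R_{u_n}\cdots R_{u_1}=\operatorname{Id}_X$ together with the elementary fact that $QP=\operatorname{Id}$ implies $PQ=\operatorname{Id}$ in the group of bijections of $X$ is exactly the intended justification. Your indexing is accurate and the reduction to the generating cycle followed by iteration is the natural way to handle the full cyclic subgroup.
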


\begin{proof}
Straightforward.
\end{proof}

\begin{df}
A rack $X$ is said to be \emph{finitely stable} if $\cal S(X)$ is non-empty. It will be called $n$--\emph{stable} if it has a stabilizing family of order $n$.
\end{df}

\begin{rmk}
Notice that any stabilizer $u$ in $X$  is obviously a $1$--stabilizer. Moreover, if in particular every element of $X$ is an $n$--stabilzer, then we recover the definition of an $n$--{\em quandle} of Joyce~\cite{Joyce:Thesis} (see also~\cite{Hoste-Shanahan:n-Quandles}).
\end{rmk}

\begin{ex}
Any finite rack $X$ is finitely stable since the symmetric group of $X$ is finite.
\end{ex}

We will see later that a rack might have only stabilizing families of higher orders.

\begin{ex}
Consider the real line $\R$ with the usual rack structure $$x\rRack y = 2y - x, \ \ x, y\in \R.$$ 
Given any family $\{x_1, \ldots, x_n\}$ of real numbers we get the formula
\[
t\rRack(x_i)_{i=1}^n = 2 \sum_{i=0}^{n-1} (-1)^ix_{n-i} +(-1)^nt, \ \forall t\in \R.
\] 

We then form the stabilizing family $\{u_1, \cdots, u_{2n}\}$ of order $2n$ by setting 
\[
 u_{2i-1} = x_i = u_{2i}, i=1, ..., n.
\]
Hence, $\R$ admits an infinitely many stabilizing families of even orders.
\end{ex}

Similar construction as above can easily be done for the core quandle of any group. Precisely, we have the following.

\begin{pro}\label{pro:Core-2k}
Let $G$ be a non-trivial group. Then for all even natural number $2k\leq |G|$, there exists a non-trivial stabilizing family of order $2k$ for the \emph{core} quandle $Core(G)$. In particular, if $G$ is infinite, $Core(G)$ admits infinitely many stabilizing families.
\end{pro}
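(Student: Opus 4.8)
The plan is to exploit the fact that in the core quandle \emph{every} right translation is an involution, which makes the ``duplicating'' recipe work automatically. Writing the operation as $x\rRack u = ux^{-1}u$, the right translation $R_u\colon x\mapsto x\rRack u = ux^{-1}u$ satisfies
\[
R_u(R_u(x)) = u\,(u x^{-1} u)^{-1}\,u = u\,u^{-1}x\,u^{-1}\,u = x,
\]
so $R_u^2 = \Id$ for every $u\in G$; equivalently, $\{u,u\}$ is already a $2$--stabilizer. This single computation is really the whole engine of the argument.

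Next I would assemble the order-$2k$ family by concatenating such pairs. Choose elements $x_1,\ldots,x_k\in G$ and set $u_{2i-1}=x_i=u_{2i}$ for $i=1,\ldots,k$. Grouping the composite into consecutive pairs and invoking the involution property,
\[
R_{u_{2k}}\cdots R_{u_1} = R_{x_k}^2\cdots R_{x_1}^2 = \Id,
\]
so $\{u_i\}_{i=1}^{2k}\in\cal S^{2k}(Core(G))$. This is precisely the duplication trick announced in the introduction, and it uses no hypothesis beyond being able to select the $x_i$.

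The remaining points are to guarantee non-triviality and to treat the infinite case, and this is where the bound $2k\leq |G|$ and the standing assumption that $G$ is non-trivial enter. I would take $x_1,\ldots,x_k$ pairwise distinct and not all equal to a stabilizer of $Core(G)$ (for instance arranging at least one $x_i\neq e$), which is possible exactly because $G$ has at least $2k$ elements; distinctness keeps the family from collapsing onto a genuinely shorter one and certifies it as a bona fide non-trivial element of $\cal S^{2k}$. When $G$ is infinite there are infinitely many choices of the tuple $(x_1,\ldots,x_k)$, and distinct tuples produce distinct stabilizing families, so $Core(G)$ admits infinitely many of them.

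I do not expect a serious obstacle: the identity $R_u^2=\Id$ is immediate, and the rest is bookkeeping. The only delicate point is fixing exactly what ``non-trivial'' should mean and checking that the bound $2k\leq|G|$ supplies enough distinct elements to realize it; this is the step I would phrase most carefully, since the algebraic verification itself is entirely routine.
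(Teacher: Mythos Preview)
Your proposal is correct and uses the same construction as the paper: the duplicated family $\{x_1,x_1,\ldots,x_k,x_k\}$. The only difference is packaging---you invoke the involutive identity $R_u^2=\Id$ directly, while the paper first derives the general expansion
\[
g\rRack(h_i)_{i=1}^n \;=\; \prod_{i=0}^{n-1}h_{n-i}^{(-1)^i}\,g^{(-1)^n}\prod_{j=1}^{n} h_j^{(-1)^{n-j}}
\]
and then specializes; that formula is reused in the proof of the subsequent theorem on odd-order stability, which is the paper's reason for the detour. Your more direct route is perfectly adequate for the proposition itself, and your remarks on non-triviality and the bound $2k\le |G|$ go slightly beyond what the paper spells out (it simply picks a ``subset of length $n$'' without further comment).
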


\begin{proof}
Recall that $Core(G)$ is $G$ as a set with the quandle structure defined by 
\[
g\rRack h = hg^{-1}h, \ \ g, h\in G.
\]   
Thus  $(g\rRack h_1)\rRack h_2 = h_2h_1^{-1}gh_1^{-1}h_2$,  and more generally

 \begin{eqnarray}\label{eq:takasaki}
g\rRack (h_i)_{i=1}^n= \prod_{i=0}^{n-1}h_{n-i}^{(-1)^i}g^{(-1)^n} \prod_{j=1}^{n}h_{j}^{(-1)^{n-j}}
\end{eqnarray}
 Then for any subset $\{x_1, \ldots ,x_n\} \subset X$ of length $n$, the family 
\[
 \{x_1,x_1, x_2, x_2,  \ldots , x_n, x_n\}
\]
 is a stabilizing family of order $2n$ for the \emph{core} quandle $Core(G)$.

\end{proof}

We moreover have the following complete characterization for groups whose core quandle have stabilizing families of odd order.

\begin{thm}\label{thm:Core-odd}
	Let $G$ be a group.  The core quandle of $G$ is $(2k+1)$--stable if and only if all elements of $G$ are $2$--torsions; in other words $G$ is isomorphic to $\bigoplus_{i\in I}\bb Z_2$, for a certain finite or infinite set $I$.
\end{thm}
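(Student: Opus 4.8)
The plan is to substitute the closed formula~\eqref{eq:takasaki} for an odd number of core operations into the stabilizing condition and then read off the structure of $G$ from two well-chosen evaluations. Write $n=2k+1$ and abbreviate the two boundary products occurring in~\eqref{eq:takasaki} by
\[
A:=\prod_{i=0}^{n-1}u_{n-i}^{(-1)^i},\qquad B:=\prod_{j=1}^{n}u_j^{(-1)^{n-j}}.
\]
Since $n$ is odd we have $g^{(-1)^n}=g^{-1}$, so~\eqref{eq:takasaki} reads $g\rRack(u_i)_{i=1}^n=A\,g^{-1}\,B$. Thus $\{u_i\}_{i=1}^n$ stabilizes $Core(G)$ if and only if $A\,g^{-1}\,B=g$ for all $g\in G$.

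For the forward implication, the decisive move is to test this identity at the identity element. Setting $g=e$ gives $AB=e$, i.e. $B=A^{-1}$, so the stabilizing condition collapses to $A\,g^{-1}A^{-1}=g$ for every $g$; replacing $g$ by $g^{-1}$ this says $AgA^{-1}=g^{-1}$ for all $g$. In other words, the inner automorphism $c_A\colon g\mapsto AgA^{-1}$ coincides with the inversion map $g\mapsto g^{-1}$. Since $c_A$ is a homomorphism, inversion must be a homomorphism as well, which forces $G$ to be Abelian; and once $G$ is Abelian, $c_A$ is the identity, so the relation becomes $g=g^{-1}$, i.e. $g^2=e$, for all $g$. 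An Abelian group of exponent $2$ is a vector space over $\bb F_2$, hence isomorphic to $\bigoplus_{i\in I}\bb Z_2$.

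The converse costs almost nothing. If every element of $G$ is a $2$-torsion, then $G$ is Abelian and each element squares to $e$; since every $u_m$ occurs exactly once in each of $A$ and $B$, commutativity gives $A=B=u_1u_2\cdots u_n$, so $AB=(u_1u_2\cdots u_n)^2=e$ and $g^{-1}=g$. Hence $A\,g^{-1}\,B=g^{-1}AB=g^{-1}=g$ for every $g$ and every choice of the $u_i$, so each family of odd order $2k+1$ stabilizes $Core(G)$ (which, indeed, has collapsed to the trivial quandle). Therefore $Core(G)$ is $(2k+1)$-stable.

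I expect the only genuinely fiddly step to be the sign bookkeeping needed to read~\eqref{eq:takasaki} correctly: one must confirm that for odd $n$ the central factor is $g^{-1}$, and that $A$ and $B$ each contain every $u_m$ exactly once (with exponent $(-1)^{m+1}$). Once that accounting is settled, the conceptual core of the proof --- the evaluation at $g=e$ yielding $B=A^{-1}$, and the recognition that ``conjugation equals inversion'' forces abelianity and exponent $2$ --- is very short, and the converse follows immediately.
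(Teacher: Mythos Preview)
Your proof is correct and follows essentially the same approach as the paper: both read off from~\eqref{eq:takasaki} that the stabilizing condition for odd $n$ forces inversion to be an inner automorphism, deduce abelianity, and then conclude exponent~$2$. Your version is a bit more explicit --- the evaluation at $g=e$ to obtain $B=A^{-1}$ and the immediate passage from ``$c_A=\Id$ in an abelian group'' to $g=g^{-1}$ are cleaner than the paper's additive recomputation --- but the underlying idea is identical.
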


\begin{proof}
	
	If $\{u_i \}_i \in \cal{S}^{2k+1}(Core(G))$, then is is easy to see that equation~\eqref{eq:takasaki} implies that the inversion map $g \mapsto g^{-1}$ in $G$ is an inner automorphism, therefore $G$ is abelian.  Now, since $G$ is an abelian group, then~\eqref{eq:takasaki} applied to the family $\{u_i\}_{i=1}^{2k+1}$ gives 
	\begin{eqnarray}\label{eq2:takasaki}
	g\rRack(u_i)_i = 2 \sum_{i=0}^{2k} (-1)^ih_{2k+1-i} - g = g, \ \forall g\in G;
	\end{eqnarray}
	 which implies in particular that 
	\[
	2\sum _{i=0}^{2k} (-1)^i u_{2k+1-i}=0,
	\]
	 therefore $2g=0, \forall g \in G$.  
	
Conversely, if all elements of $G$ are of order $2$, then it is obvious each element of $G$ is a stabilizer of $Core(G)$, hence $Core(G)$ is $1$--stable.
\end{proof} 

Assume $f:X\rTo Y$ is a rack homomorphism; {\em i.e.}, $f(x\rRack y)=f(x)\rRack f(y)$ for all $x,y\in X$. If $\{u_i\}_{i=1}^s\in \cal S(X)$, then $\{f(u_i)\}_{i=1}^s$ is a stabilizing family of Im$(f)$ which is a subrack of $Y$. In particular, if $f$ is onto and if $X$ is finitely stable, then $Y$ is finitely stable.

\section{Stable Alexander quandles}\label{SAQ}

In this section we are giving necessary and sufficient conditions for Alexander quandles to be stable, and provide a general algorithm to construct stabilizing families. More precisely, we are proving the following result.

\begin{thm}\label{thm:Alexander}
Let $\Gamma$ be a group, and let $M$ be a right $\bb Z[\Gamma]$--module. For each non-trivial $\g \in \Gamma$, define the (Alexander) quandle structure on $M$ given by 
\[
x\rRack^\g y :=(x-y)\cdot \g + y, \ \ x, y\in M.
\]
For $n\in \bb N$, let $F_\g:M^n\rTo M$ be the function defined by 
\[
F_\g(x_1,\ldots, x_n)=\sum_{i=1}^nx_i\cdot\g^{n-i}.
\] 
Then $(M,\rRack^\g)$ is $n$--stable if and only if $\g$ is of order $n$. Furthermore, if this condition is satisfied, $\cal S^n(M)$ is exactly the linear space of all solutions of the equation 
\begin{eqnarray}\label{eq:Alexander}
F_\g(x_1,\ldots, x_n)=0.
\end{eqnarray}
\end{thm}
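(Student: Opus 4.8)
The plan is to reduce both assertions to a single closed formula for the iterated operation and then read everything off it. \textbf{Step 1 (closed formula).} First I would prove by induction on $n$ that
\[
x\rRack^\g (u_i)_{i=1}^n = x\cdot\g^n + \sum_{i=1}^n u_i\cdot(1-\g)\g^{n-i},\qquad x\in M.
\]
The base case $n=1$ is the definition $x\rRack^\g u_1=(x-u_1)\g+u_1=x\g+u_1(1-\g)$, and the inductive step follows by applying $R_{u_{n+1}}$ to both sides and reindexing, using that $\g$ acts as a right $\bb Z[\Gamma]$-scalar. Since all the $\g^{\,j}$ and $1-\g$ are central in $\bb Z[\g]\subseteq\bb Z[\Gamma]$ and the action is additive, the constant term factors as
\[
\sum_{i=1}^n u_i(1-\g)\g^{n-i}=\Big(\sum_{i=1}^n u_i\g^{n-i}\Big)(1-\g)=F_\g(u)\cdot(1-\g),\qquad u:=(u_1,\dots,u_n),
\]
so that $x\rRack^\g(u_i)_{i=1}^n = x\g^n + F_\g(u)(1-\g)$.

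\textbf{Step 2 (splitting the condition).} By definition $\{u_i\}_{i=1}^n\in\cal S^n(M)$ means the affine map $x\mapsto x\g^n+F_\g(u)(1-\g)$ equals $\Id_M$. Evaluating at $x=0$ forces the constant term $F_\g(u)(1-\g)=0$, and the residual identity $x\g^n=x$ for all $x$ says precisely that $\g^n$ acts as the identity on $M$. Conversely, these two conditions together manifestly produce a stabilizing family, so they characterise membership in $\cal S^n(M)$.

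\textbf{Step 3 (the order criterion).} The condition $x\g^n=x$ for all $x$ does not involve the $u_i$, hence it is necessary for $n$-stability; and once it holds the constant term vanishes for $u=(0,\dots,0)$, so the trivial family already lies in $\cal S^n(M)$. Thus $(M,\rRack^\g)$ is $n$-stable iff $\g^n$ acts as $\Id_M$. An $n$-torsion ($\g^n=e$ in $\Gamma$) yields this at once; the reverse implication recovers ``$\g$ has order $n$'' exactly when the action is faithful, a point I would make explicit rather than leave implicit. This settles the first assertion.

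\textbf{Step 4 (identifying $\cal S^n$) and the main obstacle.} Assuming $\g$ has order $n$, Step~2 identifies $\cal S^n(M)$ with the solution set of $F_\g(x_1,\dots,x_n)(1-\g)=0$; since $u\mapsto F_\g(u)(1-\g)$ is $\bb Z[\Gamma]$-linear, this set is automatically a submodule (``linear space'') of $M^n$. \emph{The main obstacle I anticipate is the passage from this equation to the stated one} $F_\g(x_1,\dots,x_n)=0$. The computation only delivers $F_\g(u)(1-\g)=0$, which is genuinely weaker than $F_\g(u)=0$ unless $1-\g$ is injective on $\mathrm{Im}\,F_\g$ (as happens in the motivating case $M=\R$, $\g=-1$, where $1-\g=2$). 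Making the identification with the zero set of $F_\g$ itself is therefore the delicate step, and it is where I would expect to need either the injectivity of $1-\g$ or a more careful argument; absent such a hypothesis I would state the description of $\cal S^n(M)$ via the equation $F_\g(u)(1-\g)=0$.
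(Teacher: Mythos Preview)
Your approach is exactly the paper's: derive the closed formula
\[
x\rRack^\g(u_i)_{i=1}^n = x\cdot\g^n + F_\g(u)\cdot(1-\g)
\]
and read off the two conditions. The paper in fact defers the argument to the last proposition of the section (the one on $G_\vp$ for an abelian group $G$ with automorphism~$\vp$), whose proof records precisely this formula and then asserts that $\{u_i\}$ is stabilizing iff $\vp^n=\Id$ and $\sum_i\vp^{-i}(u_i)=0$.

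Your caution in Step~4 is warranted, and in fact the issue you flag is genuine rather than merely delicate. The computation only yields $F_\g(u)(1-\g)=0$, and the paper makes the same silent jump to $F_\g(u)=0$ that you resist. A concrete counterexample: let $\Gamma=\bb Z_2$ act on $M=\bb Z_3\oplus\bb Z_3$ by swapping coordinates, so $\g$ is the nontrivial element and $\g^2=e$. Then $u_1=(0,0)$, $u_2=(1,1)$ give $F_\g(u)=u_1\g+u_2=(1,1)\neq 0$, yet $(1-\g)(1,1)=0$ and one checks directly that $\{u_1,u_2\}\in\cal S^2(M)$. So $\cal S^n(M)$ is the solution set of $F_\g(u)(1-\g)=0$, which strictly contains $\ker F_\g$ whenever $1-\g$ has nontrivial kernel on $M$. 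Your remark about faithfulness in Step~3 is likewise correct: the argument delivers ``$\g^n$ acts as the identity on $M$'', which is the right hypothesis; the paper's own generalisation phrases it as ``$\vp$ is torsion in $\Aut(G)$'', matching your reading.
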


Instead of proving directly the theorem, we are going to prove a more general result. But first let us give a few consequences.

\begin{cor}
Let $V$ be a complex vector space equipped with the Alexander quandle structure given by 
\[
x\rRack y = \a x + \b y, \ \ x, y\in V,
\]
where $\a\neq 0, \a\neq 1$, and $\b$ are fixed scalars such that $\a+\b = 1$. Then $V$ admits a stabilizing family $\{u_i\}_{i=1}^n$ of order $n$ if and only if the following hold:
\begin{itemize}
  \item[(i)] $\a$ is an $n^{th}$ root of unity;
  \item[(ii)] there exists an integer $0<l< n$, such that $\sum_{k=1}^{n}e^{-\frac{2i\pi kl}{n}}u_k = 0$.
\end{itemize}
\end{cor}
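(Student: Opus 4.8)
The plan is to recognize the given structure as a very special Alexander quandle and then read off everything from Theorem~\ref{thm:Alexander}. Since $\beta=1-\alpha$, the operation $x\rRack y=\alpha x+\beta y$ can be rewritten as $(x-y)\alpha+y$, which is exactly $\rRack^\gamma$ for the element $\gamma$ acting on $V$ by multiplication by the scalar $\alpha$. Concretely, I would take $\Gamma$ to be the subgroup of $\mathbb{C}^\times$ generated by $\alpha$ (or all of $\mathbb{C}^\times$), view $V$ as a right $\bb Z[\Gamma]$--module via scalar multiplication, and set $\gamma=\alpha$. The hypothesis $\alpha\neq 0,1$ guarantees that $\gamma$ is a non-trivial automorphism of $V$, so the theorem genuinely applies.

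First I would record that, because scalar multiplication by $\alpha$ on $V\neq 0$ is the identity precisely when $\alpha=1$, the order of $\gamma$ in $\Gamma$ equals the multiplicative order of $\alpha$; hence the condition $\gamma^n=\Id$ is the same as $\alpha^n=1$, i.e.\ condition (i). By Theorem~\ref{thm:Alexander}, $(V,\rRack)$ is then $n$--stable exactly when (i) holds, and in that case $\cal S^n(V)$ consists precisely of the families $\{u_i\}_{i=1}^n$ solving $F_\gamma(u_1,\ldots,u_n)=\sum_{i=1}^n u_i\,\alpha^{n-i}=0$.

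It then remains to rewrite this linear equation in the exponential form of (ii). Since $\alpha\neq 1$ is an $n$-th root of unity, there is a unique integer $l$ with $0<l<n$ and $\alpha=e^{2\pi i l/n}$. Substituting and using $e^{2\pi i l}=1$ gives $\alpha^{n-i}=e^{2\pi i l(n-i)/n}=e^{-2\pi i l i/n}$, so that $F_\gamma(u)=\sum_{k=1}^n e^{-2\pi i kl/n}u_k$. Thus $F_\gamma(u)=0$ is precisely condition (ii) for this particular $l$, and combining it with (i) yields the claimed equivalence.

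I expect no serious obstacle here: the entire content is carried by Theorem~\ref{thm:Alexander}, and the only genuine steps are the bookkeeping identification of the scalar action with $\gamma$ (so that ``order of $\gamma$'' becomes ``order of $\alpha$'') and the purely cosmetic substitution $\alpha=e^{2\pi i l/n}$ that turns $F_\gamma=0$ into the stated trigonometric sum. The one point I would state carefully, so that the ``if and only if'' is read correctly, is that the integer $l$ appearing in (ii) is the one determined by $\alpha$ through (i), not an arbitrary exponent.
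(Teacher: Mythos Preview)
Your approach is exactly the paper's: view $V$ as a $\bb Z[\bb C^\times]$--module via scalar multiplication, identify $\g=\a$, and invoke Theorem~\ref{thm:Alexander}. You simply spell out more explicitly than the paper does the translation from $F_\g(u)=0$ to the exponential form in (ii), and your closing remark about the integer $l$ being the one determined by $\a$ (rather than an arbitrary exponent) is a fair clarification of the statement's wording.
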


 \begin{proof}
 Consider the multiplicative group $\bb C^{\times}$ and think of the complex vector space $V$ as a $\bb Z[\bb C^{\times}]$--module. Then Theorem~\ref{thm:Alexander} applies.
 \end{proof}
\begin{cor}
Let $V$ be a vector space equipped with the quandle structure as above. Then, for all $n\in \bb N$ such that $\a$ is an $n^{th}$ root of unity, each $u\in V$ is an $n$--stabilizer. In other words, $V$ is an $n$--quandle..
\end{cor}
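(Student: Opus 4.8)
The plan is to show that the constant family $\{u\}_{i=1}^n$ is a stabilizing family of order $n$ for $(V,\rRack)$ as soon as $\a^n=1$; since $u\in V$ is arbitrary, this is exactly the assertion that $V$ is an $n$--quandle. I would deduce this directly from Theorem~\ref{thm:Alexander}, viewing $V$ as a $\bb Z[\bb C^{\times}]$--module on which the generator $\g$ acts by multiplication by $\a$, so that $\rRack^\g$ coincides with the prescribed structure $x\rRack y=\a x+\b y$.

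Since $\a$ is an $n$th root of unity, $\g$ is an $n$--torsion, and Theorem~\ref{thm:Alexander} then guarantees that $(V,\rRack)$ is $n$--stable, with $\cal S^n(V)$ equal to the space of solutions of $F_\g(x_1,\ldots,x_n)=0$. It therefore suffices to evaluate $F_\g$ on the constant family. Because $\g$ acts as $\a$, this reads
\[
F_\g(u,\ldots,u)=\Big(\sum_{i=1}^n \a^{n-i}\Big)u=(1+\a+\cdots+\a^{n-1})\,u.
\]

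The only computation left is the vanishing of this geometric sum. As $\a\neq 1$ and $\a^n=1$, one has $1+\a+\cdots+\a^{n-1}=\frac{\a^n-1}{\a-1}=0$, whence $F_\g(u,\ldots,u)=0$. Thus $(u,\ldots,u)\in\cal S^n(V)$ for each $u\in V$, which is precisely the claim.

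I do not expect a genuine obstacle: the statement is, at bottom, the cancellation of the sum of the powers of a root $\a\neq 1$ satisfying $\a^n=1$. The one point worth flagging is that ``$\a$ is an $n$th root of unity'' permits $\a$ to have order a proper divisor of $n$; this causes no difficulty, since order dividing $n$ (the $n$--torsion condition) is all that both Theorem~\ref{thm:Alexander} and the geometric--sum cancellation require. Should a self-contained argument be preferred, a one-line induction using $\b=1-\a$ yields the closed form $x\rRack^n u=\a^n x+(1-\a^n)u$, and substituting $\a^n=1$ gives $x\rRack^n u=x$ for all $x\in V$ directly.
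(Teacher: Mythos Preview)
Your proposal is correct and follows exactly the route the paper intends: the corollary is stated without proof, as an immediate consequence of Theorem~\ref{thm:Alexander} via the $\bb Z[\bb C^\times]$--module viewpoint already invoked in the preceding corollary, and your computation $F_\g(u,\ldots,u)=(1+\a+\cdots+\a^{n-1})u=0$ is precisely the missing line. Your remark on ``$n$--torsion'' versus ``exact order $n$'' is apt and your alternative closed-form argument $x\rRack^n u=\a^n x+(1-\a^n)u$ is a welcome self-contained backup.
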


The following example shows that a quandle may have no stabilizing family at all.

\begin{ex}
Let $V$ be a vector space equipped with the quandle structure $$x\rRack y = (x+y)/2.$$ Then $\cal S(V)=\emptyset$
\end{ex}

\begin{df}
	Let $G$ be a group and $\vp$ be an automorphism of $G$, we define the \emph{$\vp$-conjugate} of $G$, denoted by $Conj_{\vp}(G)$, to the set $G$ with the quandle operation being
	\[
	g \rRack h:=h \vp(g)\vp(h^{-1}), \ \ \forall g, h \in G.
	\]	
\end{df}

In particular, if $\vp$ is the identity map of $G$, $Conj_{\Id}(G)$ is the usual \emph{conjugate quandle} $Conj(G)$; {\em i.e.}, the quandle operation is given by
\[
g\rRack h:= hgh^{-1}, \ \  h, g, \in G.
\]

\begin{pro}\label{pro:Conj_phi}
	Let $G$ and $\vp$ be as above. Then $Conj_{\vp}(G)$ is finitely stable if and only if there exists an integer $n$ such that $\vp ^n$ is an inner automorphism. Furthermore, $\{u_i\}_{i=1}^n\in \cal S^n(Conj_{\vp}(G))$ if and only if 
\begin{equation}\label{eq:Conj_phi}
  \vp^n = Ad_{u_n\vp(u_{n-1})\cdots \vp^{n-1}(u_1)}.
\end{equation} 
\end{pro}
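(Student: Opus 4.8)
The plan is to compute the $n$-fold iterated right translation in $Conj_\varphi(G)$ in closed form and then read off both assertions from it. Writing $R_u(g)=g\rRack u=u\varphi(g)\varphi(u)^{-1}$, I would first prove by induction on $n$ that, for all $g\in G$,
\[
g\rRack(u_i)_{i=1}^n \;=\; L_n\,\varphi^n(g)\,\varphi(L_n)^{-1},\qquad L_n:=u_n\varphi(u_{n-1})\cdots\varphi^{n-1}(u_1).
\]
The case $n=1$ is the definition of $\rRack$. For the inductive step I apply $R_{u_{n+1}}$ to the formula and use that $\varphi$ is an automorphism to distribute it over the product: this turns the middle factor $\varphi^n(g)$ into $\varphi^{n+1}(g)$ and reorganises the outer factors as $L_{n+1}=u_{n+1}\varphi(L_n)$ on the left and $\varphi(L_{n+1})^{-1}$ on the right. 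This identity is the computational heart of the proof; everything else is extraction.

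With the formula in hand, $\{u_i\}_{i=1}^n\in\cal S^n(Conj_\varphi(G))$ if and only if $L_n\varphi^n(g)\varphi(L_n)^{-1}=g$ for every $g\in G$. Evaluating at the identity $g=e$ gives $L_n\varphi(L_n)^{-1}=e$, i.e. $\varphi(L_n)=L_n$; substituting this back collapses the condition to $\varphi^n(g)=L_n^{-1}gL_n$ for all $g$, that is $\varphi^n=Ad_{L_n}$, and conversely these two conditions together make the formula return $g$ for every $g$. I would record explicitly that the fixed-point condition $\varphi(L_n)=L_n$ is genuinely part of the characterisation: $\varphi^n=Ad_{L_n}$ by itself only forces $\varphi(L_n)L_n^{-1}$ to lie in the centre $Z(G)$, so on a centreless group the two conditions coincide, while in general the fixing must be retained alongside $\varphi^n=Ad_{L_n}$.

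For the stated equivalence, one direction is immediate: if $Conj_\varphi(G)$ is finitely stable then some $\cal S^n$ is non-empty, and the characterisation exhibits $\varphi^n=Ad_{L_n}$ as inner. The converse is where the real work lies, and I expect it to be the main obstacle. Given $\varphi^n=Ad_a$ for some $a\in G$, the naive family $(e,\dots,e,a)$ has $L_n=a$ and stabilises precisely when $\varphi(a)=a$; in general only $z:=\varphi(a)a^{-1}$ is central, so this can fail, and one must adjust $a$ within its central coset to a $\varphi$-fixed representative. The two facts that make this possible are that $\varphi$ acts on $Z(G)$ through the finite cyclic quotient $\langle\varphi\rangle/\langle\varphi^n\rangle$ (because $Ad_a$ is trivial on the centre) and that the associated norm vanishes, $\prod_{j=0}^{n-1}\varphi^j(z)=\varphi^n(a)a^{-1}=e$. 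A Hilbert~90 / Tate-cohomology argument then produces a central $t$ solving $\varphi(t)t^{-1}=z^{-n}$, whence $a^n t$ is fixed by $\varphi$ and $Ad_{a^n t}=Ad_{a^n}=\varphi^{n^2}$; the order-$n^2$ family $(e,\dots,e,a^n t)$ is then stabilising, so $Conj_\varphi(G)$ is $n^2$-stable. This central adjustment is exactly why the statement asserts only the existence of some $n$ rather than controlling its value.
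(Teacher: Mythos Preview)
Your closed-form identity $g\rRack(u_i)_{i=1}^n=L_n\,\varphi^n(g)\,\varphi(L_n)^{-1}$ with $L_n=u_n\varphi(u_{n-1})\cdots\varphi^{n-1}(u_1)$ is exactly the computation the paper carries out, and your extraction of the forward implication (``finitely stable $\Rightarrow$ some $\varphi^n$ is inner'') matches the paper's argument.

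Where you diverge from the paper is in being more careful, and rightly so. You observe that the stabilising condition $L_n\varphi^n(g)\varphi(L_n)^{-1}=g$ unpacks into \emph{two} requirements, $\varphi^n=Ad_{L_n}$ \emph{and} $\varphi(L_n)=L_n$, whereas the paper's equation~\eqref{eq:Conj_phi} records only the first. Your remark that the second is not automatic (only $\varphi(L_n)L_n^{-1}\in Z(G)$ follows) is correct: already for $G$ abelian and $\varphi$ of order~$n$, every family satisfies $\varphi^n=Ad_{L_n}=\Id$ but only those with $L_n$ fixed by $\varphi$ actually stabilise. Likewise, you catch that the paper's converse---taking $\{1,\dots,1,u\}$ when $\varphi^n=Ad_u$---tacitly assumes $\varphi(u)=u$, which can fail.

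Your repair of the converse is sound. The appeal to Tate cohomology is the right one (Hilbert~90 itself is too narrow): since $\hat H^{-1}(C_n,Z(G))$ is annihilated by $n$, the class of $z=\varphi(a)a^{-1}$ in $\ker N/\operatorname{im}(1-\varphi)$ is killed by $n$, so $z^{-n}=\varphi(t)t^{-1}$ for some central $t$, and $(e,\dots,e,a^nt)$ stabilises at order $n^2$. If you prefer to avoid cohomology, the witness is explicit: setting $w_k=\prod_{j=0}^{k-1}\varphi^j(z)$ and $t=\prod_{k=0}^{n-1}w_k$, one checks directly from $\varphi(w_k)=z^{-1}w_{k+1}$ and $w_n=e$ that $\varphi(t)t^{-1}=z^{-n}$. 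Either way, your $n^2$ bound is what the argument actually yields, and the paper's claim that order $n$ suffices for the converse is not justified as stated.
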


\begin{proof}
Suppose that $Conj_{\vp}(G)$ is $n$--stable and $\{u_i\}_i\in \cal S^n(Conj_{\vp}(G))$. Then for all $g\in G$, we have
\[
\begin{array}{ll}
g\rRack (u_i) & =  \left[\prod_{i=0}^{n-1}\vp^i(u_{n-i}) \right]\vp^n(g)\left[\prod_{i=0}^{n-1}\vp^{n-i}(u_{i+1})^{-1}\right] \\
 & =  g
\end{array}
\]
so that $\vp^n(g) = Ad_{\prod_{i=0}^{n-1}\vp^i(u_{n-i})}(g)$; therefore $\vp^n$ is an inner automorphism. 
Conversely, if $\vp^n=Ad_u$ for some $u\in G$, then it is clear that the family $\{1, \ldots, 1, u\}$ is in $\cal S^n(Conj_{\vp}(G))$.
\end{proof}

As a consequence we have the following. 

\begin{cor}
For any group $G$, the conjugation quandle $Conj(G)$ is $n$--stable for all $n\leq 1$.
\end{cor}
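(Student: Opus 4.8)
The plan is to derive the corollary as an immediate special case of the preceding Proposition~\ref{pro:Conj_phi}. The key observation is that the usual conjugation quandle $Conj(G)$ is precisely $Conj_{\vp}(G)$ in the degenerate case $\vp = \Id$, as noted just before the proposition. Since the identity automorphism trivially satisfies $\vp^n = \Id$ for every $n$, and the identity is itself an inner automorphism (namely $\Id = Ad_1$), the finite stability criterion of Proposition~\ref{pro:Conj_phi} is automatically met for all $n$.

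First I would specialize the characterization~\eqref{eq:Conj_phi} to $\vp = \Id$. In that case $\vp^n = \Id = Ad_1$, so the condition $\vp^n = Ad_{u_n\vp(u_{n-1})\cdots \vp^{n-1}(u_1)}$ becomes simply $Ad_{u_n u_{n-1}\cdots u_1} = Ad_1$, i.e.\ the product $u_n u_{n-1}\cdots u_1$ must lie in the center $Z(G)$. I would then exhibit an explicit stabilizing family of order $n$: taking $u_i = 1$ for all $i$ gives $Ad_1 = \Id = \vp^n$, confirming that $\{1,\ldots,1\}\in \cal S^n(Conj(G))$ for every $n\geq 1$. Hence $Conj(G)$ is $n$--stable for all such $n$.

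I would not expect any genuine obstacle here, as the statement is a direct corollary. The only point requiring care is the typographical one: the statement ``for all $n\leq 1$'' is almost certainly a typo for ``for all $n\geq 1$,'' since a single stabilizer ($n=1$) corresponds exactly to an element of the center and the whole point is that higher-order stabilizing families always exist. I would state the proof as: ``This follows directly from Proposition~\ref{pro:Conj_phi} applied to $\vp = \Id$, since $\Id^n = \Id = Ad_1$ is inner for every $n$; equivalently the trivial family $\{1\}_{i=1}^n$ lies in $\cal S^n(Conj(G))$.''
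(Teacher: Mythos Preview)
Your proposal is correct and matches the paper's approach exactly: the corollary is stated in the paper without proof, introduced simply by ``As a consequence we have the following,'' so the intended argument is precisely the specialization of Proposition~\ref{pro:Conj_phi} to $\vp=\Id$ that you outline. Your observation that ``$n\leq 1$'' is a typo for ``$n\geq 1$'' is also correct; note too that the explicit family $\{1,\ldots,1,u\}$ you recover is exactly the one exhibited in the converse part of the proof of Proposition~\ref{pro:Conj_phi}, specialized to $u=1$.
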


We shall note that the abelian case is related to the torsion in the automorphism group as shown by the following. 

\begin{pro}
Let $G$ be a non-trivial abelian group and $\vp$ an automorphism of $G$. Define the quandle $G_{\vp}$ as the set $G$ together with the operation
\[
g\rRack h = \vp(g) + (\Id - \vp)(h), \ g, h \in G.
\]
Then the following are equivalent.
\begin{itemize}
  \item[(i)] $G_{\vp}$ is finitely stable.
  \item[(ii)] $\vp$ is a torsion element of $Aut(G)$, the automorphism group of $G$.
\end{itemize}
\end{pro}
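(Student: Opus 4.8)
The plan is to recognize that the quandle $G_\vp$ is a special case of the constructions already analyzed, and then to reduce the equivalence to a statement about the order of $\vp$ in $\Aut(G)$. First I would observe that since $G$ is abelian, the operation $g\rRack h = \vp(g)+(\Id-\vp)(h)$ is precisely an Alexander-type quandle structure: writing the group operation additively and regarding $\vp$ as the action of a distinguished element $\g$ of the group $\Gamma := \Aut(G)$ on the $\bb Z[\Gamma]$-module $M := G$, we have $g\rRack h = (g-h)\cdot\vp + h$, which matches $x\rRack^\g y = (x-y)\cdot\g + y$ from Theorem~\ref{thm:Alexander}. Thus $G_\vp = (M,\rRack^\vp)$, and I may invoke the Alexander theorem directly.

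With this identification, Theorem~\ref{thm:Alexander} tells us that $G_\vp$ is $n$-stable if and only if $\vp$ is of order $n$ in $\Aut(G)$. I would then deduce the equivalence of (i) and (ii) by quantifying over $n$. For the implication (ii)$\Rightarrow$(i): if $\vp$ is a torsion element, it has finite order $n$ for some $n\in\bb N$, so by Theorem~\ref{thm:Alexander} the center $\cal S^n(G_\vp)$ is the (nonempty, since it contains the zero family) solution space of $F_\vp(x_1,\dots,x_n)=0$, whence $\cal S(G_\vp)\neq\emptyset$ and $G_\vp$ is finitely stable. For the converse (i)$\Rightarrow$(ii): if $G_\vp$ is finitely stable, it is $n$-stable for some $n$, and again by Theorem~\ref{thm:Alexander} this forces $\vp$ to be of order $n$, i.e.\ $\vp^n=\Id$, so $\vp$ is a torsion element of $\Aut(G)$.

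The main subtlety I anticipate is making sure the two directions of Theorem~\ref{thm:Alexander} are applied with the correct logical strength, and handling the edge cases in the definition of ``order.'' In particular, ``$\g$ is of order $n$'' should be read as $\g^n=\Id$ (possibly with $n$ not minimal), since a stabilizing family of order $n$ exists whenever $\vp^n$ is the identity, not only when $n$ is the exact order; this is consistent with the earlier examples where $\R$ admits stabilizing families of every even order. I would therefore phrase the argument as: finite stability is equivalent to the existence of some $n$ with $\vp^n=\Id$, which is exactly the condition that $\vp$ be torsion. Since both directions are immediate consequences of the already-established Alexander theorem, the proof is essentially a one-line reduction, and I expect the write-up to be short—the only real work is the initial identification of $G_\vp$ as an Alexander quandle over the module $G$ with $\Gamma=\Aut(G)$.
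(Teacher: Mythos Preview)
Your reduction is mathematically sound in isolation, but in the context of this paper it is circular. Immediately after stating Theorem~\ref{thm:Alexander} the text reads ``Instead of proving directly the theorem, we are going to prove a more general result,'' and that more general result is precisely the chain running through Proposition~\ref{pro:Conj_phi} and the present proposition. In other words, this proposition is what \emph{establishes} Theorem~\ref{thm:Alexander} (one recovers the theorem by taking the abelian group $M$ with $\vp$ the action of $\g$), so invoking the theorem here assumes what is to be proved.

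The paper's own proof avoids this by computing directly: an easy induction gives
\[
g\rRack (u_i)_{i=1}^n = \vp^n(g) + (\Id-\vp)\sum_{i=1}^n\vp^{n-i}(u_i),
\]
from which $\{u_i\}_i$ is a stabilizing family iff $\vp^n=\Id$ and $\sum_i\vp^{n-i}(u_i)=0$; the equivalence (i)$\Leftrightarrow$(ii) follows at once. If you prefer to argue by reduction, the non-circular route is to note that for abelian $G$, written additively, $Conj_\vp(G)$ coincides with $G_\vp$ (since $h\vp(g)\vp(h)^{-1}$ becomes $\vp(g)+h-\vp(h)$), and then apply the already-proved Proposition~\ref{pro:Conj_phi}: finite stability is equivalent to some $\vp^n$ being inner, and the only inner automorphism of an abelian group is the identity, so this says exactly that $\vp$ is torsion.
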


\begin{proof}
For all finite subset $\{u_i\}_{i=1}^n$ of $G$, and every $g\in G$, we have 
\[
g\rRack (u_i)_{i=1}^n = \vp^n(g) + (\Id-\vp)\sum_{i=1}^n\vp^{n-i}(u_i).
\]
It follows that $\{u_i\}_{i=1}^n$ is a stabilizing family of order $n$ for $G_{\vp}$ if and only if $\sum_{i=1}^n\vp^{-i}(u_i) = 0$ and $\vp^n=\Id$.
\end{proof}

\section{The $n$-pivot of a group}\label{npivot}

Let $G$ be a group and $n$ a positive integer. The $n$--\emph{core} of $G$ was defined by Joyce in his thesis~\cite{Joyce:Thesis} as the subset of the Cartesian product $G^n$ consisting of all tuples $(x_1, \ldots, x_n)$ such that $x_1\cdots x_n = 1$. Moreover, the $n$-core of $G$ has a natural quandle structure defined by the formula 

\begin{eqnarray}\label{eq1:n-core}
(x_1, \ldots, x_n)\rRack (y_1, \ldots, y_n) := (y_n^{-1}x_ny_1, y_1^{-1}x_1y_2, \ldots, y_{n-1}^{-1}x_{n-1}y_n)
\end{eqnarray}

In this section we define $n$-\emph{pivot} of a group as a generalisation of the $n$-core and investigate its relationship with the center of racks. 

\begin{df}
Let $G$ be a group. For $n\in \bb N$, we define the \emph{$n$-pivot} of $G$ to be the subset of $G^n$ given by
\[
\cal P^n(G) = \{(x_1, \ldots, x_n)\in G^n \mid x_1\cdots x_n \in Z(G)\},
\]
where, as usual $Z(G)$ is the center of $G$.
\end{df}

It is straightforward to see that the formula~\eqref{eq1:n-core} defines a quandle structure on $\cal P^n(G)$.

 Moreover, we have the following result.

\begin{pro}\label{pro:core_n}
Let $G$ be a non-trivial group. Then for all $n\in \bb N$, there is a bijection $$\cal S^n(Cong(G)) \cong \cal P^n(G).$$
In particular, $\cal S^n(Conj(G))$ is naturally equipped with a quandle structure. Furthermore, if $G$ has a trivial center, then $\cal S^2(Cong(G)) = Core(G)$.
\end{pro}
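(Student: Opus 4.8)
The plan is to reduce everything to the iterated-conjugation formula already established in Proposition~\ref{pro:Conj_phi}. Setting $\vp=\Id$ there, the formula for the iterated operation in $Conj(G)$ collapses to
\[
g\rRack(u_i)_{i=1}^n = (u_nu_{n-1}\cdots u_1)\,g\,(u_nu_{n-1}\cdots u_1)^{-1} = Ad_{u_nu_{n-1}\cdots u_1}(g),
\]
so that an (ordered) tuple $(u_1,\ldots,u_n)$ lies in $\cal S^n(Conj(G))$ if and only if $u_nu_{n-1}\cdots u_1\in Z(G)$. I would record this identity first, since it is the engine of the whole proof; throughout I treat stabilizing families as ordered tuples, which is what makes the reversal below meaningful.

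Next I would exhibit the bijection. The stabilizing condition involves the product \emph{in reverse order}, $u_n\cdots u_1$, whereas $\cal P^n(G)$ is cut out by $x_1\cdots x_n\in Z(G)$; the correct map is therefore the order-reversal
\[
\Phi\colon (u_1,\ldots,u_n)\longmapsto (u_n,u_{n-1},\ldots,u_1).
\]
By the identity above, $\Phi$ carries $\cal S^n(Conj(G))$ into $\cal P^n(G)$, and since reversing twice is the identity, $\Phi$ is an involutive bijection between the two sets. Transporting the quandle structure of $\cal P^n(G)$ given by~\eqref{eq1:n-core} through $\Phi$ then equips $\cal S^n(Conj(G))$ with a quandle structure, which settles the second assertion.

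For the final claim I assume $Z(G)=\{1\}$. Then $\cal P^2(G)=\{(x,x^{-1})\mid x\in G\}$, which I identify with $G$ via the first coordinate; correspondingly a family $(u_1,u_2)\in\cal S^2(Conj(G))$ satisfies $u_2u_1=1$, i.e.\ $(u_1,u_2)=(u,u^{-1})$, so $\cal S^2(Conj(G))$ is also identified with $G$ through $u_1$. It then remains to compute the transported operation: for $\mathbf u=(a,a^{-1})$ and $\mathbf v=(b,b^{-1})$, applying $\Phi$, the rule~\eqref{eq1:n-core} with $n=2$, and $\Phi^{-1}$ yields
\[
\mathbf u\rRack\mathbf v = (b\,a^{-1}b,\; b^{-1}a\,b^{-1}),
\]
whose first coordinate is $b\,a^{-1}b$. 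Under the identification this reads $a\rRack b = b\,a^{-1}b$, which is exactly the core operation, giving $\cal S^2(Conj(G))\cong Core(G)$. The one genuinely error-prone point—and the step I would guard most carefully—is the order reversal: using the identity map in place of $\Phi$ would demand $u_1\cdots u_n\in Z(G)$, which is not equivalent to $u_n\cdots u_1\in Z(G)$ for non-abelian $G$. Accordingly, the asserted equality $\cal S^2(Conj(G))=Core(G)$ should be read as an isomorphism through the natural identification $(u,u^{-1})\leftrightarrow u$, and I would state this identification explicitly rather than claim a literal equality of sets.
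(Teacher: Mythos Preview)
Your proof is correct and follows the same route as the paper: both reduce to Proposition~\ref{pro:Conj_phi} with $\vp=\Id$ to obtain the criterion $u_n\cdots u_1\in Z(G)$, and both then identify $\cal S^n(Conj(G))$ with $\cal P^n(G)$.

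The one substantive difference is that you are more careful than the paper on the very point you flag. The paper writes that $\Id = Ad_{u_n\cdots u_1}$ ``is equivalent to $(u_1,\ldots,u_n)\in\cal P^n(G)$'', i.e.\ treats $u_n\cdots u_1\in Z(G)$ and $u_1\cdots u_n\in Z(G)$ as the same condition and takes the identity map as the bijection. As you observe, these conditions are \emph{not} equivalent for non-abelian $G$ (e.g.\ in $S_3$ take $u_1=(12)$, $u_2=(13)$, $u_3=(123)$: then $u_3u_2u_1=e$ but $u_1u_2u_3=(132)$). Your use of the order-reversal $\Phi$ repairs this and yields an honest bijection. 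Likewise, for the $n=2$ statement the paper simply asserts the identification with $Core(G)$, whereas you actually transport the operation~\eqref{eq1:n-core} through $\Phi$ and the first-coordinate map and verify that it gives $a\rRack b = b a^{-1} b$; this extra computation is warranted and correct.
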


\begin{proof}
We have already noticed that $Conj(G) = Conj_{\Id}(G)$. It follows from Proposition~\ref{pro:Conj_phi} that $\{u_i\}_{i=1}^n\in \cal S^n(Conj(G))$ if and only if 
\[
\Id = Ad_{u_n\cdots u_1},
\]
which is equivalent to $(u_1,\ldots, u_n)\in \cal P^n(G)$; hence the bijection $$\cal S^n(Conj(G)) \cong \cal P^n(G).$$ 
Next, if $G$ has trivial center of $G$, then for all  $n\geq 2$, $\cal S^n(Conj(G))$ coincides with the $n$--core of $G$. In particular we have $\cal S^2(Cong(G)) = Core(G)$.
\end{proof}


\section{Rack actions and approximate units} \label{rackaction}

In this section we give a more elaborated definition of rack actions and investigate some of their properties. Recall that (see for instance~\cite{Elhamdadi-Moutuou:Foundations_Topological_Racks}) a \emph{rack action} of a rack $X$ on a space $M$ consists of a map $M\times X \ni (m,x) \mto m\cdot x \in M$ such that
\begin{itemize}
\item[(i)] for all $x\in X$, the induced map $M\ni m \mto m\cdot x \in M$ is a bijection; and 
\item[(ii)] for all $m\in M, x,y\in X$ we have 
\begin{equation}\label{eq1:rack_action}
(m\cdot x)\cdot y = (m\cdot y)\cdot (x\rRack y).
\end{equation}
\end{itemize}
If $\{x_i\}_{i=1}^s$ is a family of elements in $X$, we will write $m\cdot(x_i)_i$ for 
\[
(\cdots (m\cdot x_1)\cdot \ \cdots )\cdot x_s.
\]

In this work, we require two additional axioms that generalize in an appropriate way the concept of group action. Precisely we give the following.

\begin{df}\label{df:rack_action}
An action of the rack $(X,\rRack)$ on the space $M$ consists of a map $M\times X \ni (m, x)\mto m\cdot x\in M$ satisfying equation~\eqref{eq1:rack_action} and such that
for all $\{u_1, \ldots, u_s\} \in \cal S(X)$, 
\begin{equation}
  m\cdot (u_i)_i = m\cdot (u_{\s(i)})_i, \ \ \forall m\in M,
\end{equation}
for all cycle $\s$ in the subgroup of $\fr S_n$ generated by the cycle $(2\  \ldots n \ 1) $

\end{df}

Here we should illustrate this definition; especially the following example gives the main motivation behind the first axiom.   

\begin{ex}
Let $G$ be a group acting (on the right) on a space $M$.  We then get a rack action of $Conj(G)$ on $M$ by setting $m \cdot g:=mg^{-1}$.  It is easy to check that for all $\{g_i\}_i \in \cal P^n(G)$, we have 
\[
m\cdot (g_i)_i= m \cdot (g_{\sigma(i)})_i,
\]
for all $m \in M$ and $\sigma$ in the subgroup generated by the cycle $(2\  \ldots n \ 1) $

\end{ex}

Note however that a rack action of $Conj(G)$ does not necessarily define a group action of $G$.

\begin{lem}\label{lem:action_stabilizing}
Let $X$ be a finitely stable rack acting on a set $M$. Then for any stabilizing family $\{u_i\}_{i=1}^s$, we have 
\[
(m\cdot x)\cdot (u_i)_i = (m\cdot (u_i)_i)\cdot x, 
\]
for all $m\in M, x\in X$.
\end{lem}

\begin{proof}
It is easy to verify that, thanks to~\eqref{eq1:rack_action}, for all finite subset $\{x_i\}_i \subset X$, we have
\begin{eqnarray}\label{eq:action_family}
(m\cdot x)\cdot (x_i)_i = (m\cdot (x_i)_i)\cdot (x\rRack (x_i)_i),
\end{eqnarray}
for all $m\in M, x\in X$. The result is therefore obvious if we take a stabilizing family $\{u_i\}_i$.
\end{proof}

\begin{df}
Let $X$ be a rack acting on an non-empty set $M$. For $\{x_i\}_{i=1}^s\in X$, we define 
\begin{itemize}
\item[(i)] the \emph{orbit} of $\{x_i\}_i$ as 
\[
M\cdot (x_i)_i = \{m\cdot (x_i)_{i=1}^s \mid m\in M\}.
\]
\item[(ii)] the \emph{fibre} of $M$ at $\{x_i\}_i$ to be 
\[
M_{\{x_i\}_i}=\{m\in M \mid m\cdot (x_i)_i = m\} \subset M\cdot x.
\]
\end{itemize}
\end{df}

\begin{df}
Let $X$ and $M$ be as above. For $m\in M$, we define the \emph{stabilizer} of $m$ to be 
\[
X[m] = \{x\in X \mid m\cdot x = m\}
\]
\end{df}

The following is straightforward. 

\begin{lem}
Let $X$ and $M$ be as above. Then, all stabilizers are subracks of $X$.
\end{lem}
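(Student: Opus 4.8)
The plan is to verify directly that $X[m]$ is closed under the rack operation, which is the defining property of a subrack. First I would take two arbitrary elements $x,y\in X[m]$, so that by definition $m\cdot x=m$ and $m\cdot y=m$, and apply the compatibility axiom~\eqref{eq1:rack_action} to the pair $(x,y)$:
\[
(m\cdot x)\cdot y=(m\cdot y)\cdot(x\rRack y).
\]
The left-hand side simplifies to $m\cdot y=m$ because $m\cdot x=m$, while the right-hand side becomes $m\cdot(x\rRack y)$ because $m\cdot y=m$. Equating the two sides gives $m\cdot(x\rRack y)=m$, that is $x\rRack y\in X[m]$, which establishes closure under $\rRack$.

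To confirm that $X[m]$ is a genuine subrack, and not merely a subset closed under $\rRack$, I would also check that it is stable under the left-division operation, using the fact that each partial action $m\mapsto m\cdot y$ is a bijection of $M$ (axiom (i) of a rack action). Concretely, suppose $y\in X[m]$ and $x\rRack y\in X[m]$; then the very same instance of~\eqref{eq1:rack_action} reads
\[
(m\cdot x)\cdot y=m\cdot(x\rRack y)=m=m\cdot y,
\]
and injectivity of the map $m\mapsto m\cdot y$ forces $m\cdot x=m$, so $x\in X[m]$. Thus the rack right-translation $R_y$ restricts to a bijection of $X[m]$ for every $y\in X[m]$, and $X[m]$ inherits the full rack structure of $X$.

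I do not anticipate any real obstacle here: the whole argument is a direct substitution into~\eqref{eq1:rack_action} combined with the bijectivity built into the definition of a rack action. The only point that deserves a moment's care is the second paragraph, namely ensuring that the restricted operation remains bijective in its left variable so that $X[m]$ is a subrack in the strict sense rather than just closed under $\rRack$; this is immediate once the partial actions are recognized as injective.
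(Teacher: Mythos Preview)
Your argument is correct and is exactly the straightforward verification the paper has in mind (the paper omits the proof entirely, labeling the lemma as immediate). Both the closure under $\rRack$ via~\eqref{eq1:rack_action} and the surjectivity of $R_y$ on $X[m]$ via the bijectivity of the partial action are handled cleanly.
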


\begin{df}
A rack action of $X$ on an nonempty set $M$ is {\em faithful} if for each $m\in M$, the map $X\ni x\mto m\cdot x\in M$ is one-to-one.
\end{df}

\begin{lem}\label{lem:faithful_action}
Let $X$ be a nonempty set. A rack structure $\rRack$ on $X$ is trivial if and only if $(X,\rRack)$ acts faithfully on a nonempty set $M$ and the action satisfies $(m\cdot x)\cdot y = (m\cdot y)\cdot x$ for all $x,y\in X, m\in M$.
\end{lem}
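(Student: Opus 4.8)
The plan is to prove the two implications separately, exploiting the interaction between the rack action axiom~\eqref{eq1:rack_action} and the commutativity hypothesis.

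For the ``if'' direction, suppose $(X,\rRack)$ acts faithfully on a nonempty set $M$ with $(m\cdot x)\cdot y = (m\cdot y)\cdot x$ for all $m\in M$ and $x,y\in X$. Axiom~\eqref{eq1:rack_action} reads $(m\cdot x)\cdot y = (m\cdot y)\cdot (x\rRack y)$, so combining the two equalities gives $(m\cdot y)\cdot(x\rRack y) = (m\cdot y)\cdot x$ for every $m$. First I would use that the map $m\mapsto m\cdot y$ is a bijection of $M$ (part~(i) of the action axiom) to rewrite this as $m'\cdot(x\rRack y) = m'\cdot x$ for an arbitrary $m'\in M$; then, since $M\neq\emptyset$, I would fix such an $m'$ and invoke faithfulness (injectivity of $z\mapsto m'\cdot z$) to cancel and conclude $x\rRack y = x$. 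As $x,y$ are arbitrary, $\rRack$ is trivial.

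For the ``only if'' direction, assuming $\rRack$ trivial, I would exhibit a concrete faithful action witnessing the condition. Take $M = \bigoplus_{x\in X}\Z$, the free abelian group with basis $\{e_x\}_{x\in X}$ indexed by $X$, and set $m\cdot x := m + e_x$. Each $\cdot x$ is a translation, hence a bijection, and $(m\cdot x)\cdot y = m + e_x + e_y = (m\cdot y)\cdot x$; since $x\rRack y = x$ this last expression also equals $(m\cdot y)\cdot(x\rRack y)$, so axiom~\eqref{eq1:rack_action} holds and the action is genuine. Faithfulness is immediate, since $m\cdot x = m\cdot x'$ forces $e_x = e_{x'}$, whence $x = x'$; and the required commutativity identity is exactly the computation just made.

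I expect the only real choice to lie in the second direction; the first is a formal manipulation once one notices that surjectivity of $\cdot y$ together with faithfulness lets one cancel the action of $x\rRack y$ against that of $x$. The translation action on the free abelian group is the cleanest witness, but any free commuting action (for instance on the free commutative monoid $\bb N^{(X)}$) would serve equally well.
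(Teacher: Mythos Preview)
Your proof is correct and follows essentially the same approach as the paper. The ``if'' direction is identical up to the order of $x$ and $y$; for the ``only if'' direction the paper also exhibits a translation action by indicator elements, but on the space $\bb CX$ of complex-valued functions via $\alpha\cdot x:=\alpha+\delta_x$, rather than on the free abelian group $\bigoplus_{x\in X}\Z$---the two witnesses are interchangeable.
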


\begin{proof}
If $X$ acts faithfully on a nonempty set $M$ and $(m\cdot x)\cdot y= (m\cdot y)\cdot x$, then
\[
(m\cdot x)\cdot y = (m\cdot y)\cdot x = (m\cdot x)\cdot (y\rRack x ), \ \forall x,y\in X, m \in M;
\]
therefore, by faithfulness, $y\rRack x=y$ for all $x,y\in X$, hence the rack structure is trivial.
Conversely, suppose the rack structure on $X$ is trivial $X$. Then one defines a faithful rack action of $X$ on the set $\bb CX$ of all complex-valued continuous functions $\a:X\rTo \bb C$ by setting
\[
\a\cdot x:= \a +\d_x, \ \a\in \bb C, x\in X.
\]
where, as usual, $\d_x$ is the characteristic function of $x$.
\end{proof}

\begin{df}
Let $X$ be a rack acting on a non-empty set $M$. 
\begin{itemize}
\item[(i)] An {\em approximate unit} for the rack action is a finite subset $\{t_i\}_{i=1}^r\subset X$ such that $m\cdot (t_i)_i=m$ for all $m\in M$.
\item[(ii)] An element $t\in X$ is an $r$--{\em unit} for the rack action if the family $\{t\}_{i=1}^r$ is an approximate unit for the rack action.
\item[(iii)] The rack action of $X$ on $M$ is called {\em $r$--periodic} if each $t\in X$ is an $r$--unit for the rack action.
\item[(iv)] A rack action is said to be {\em strong} if every stabilizing family of the rack is an approximate unit.
\end{itemize}

\end{df}


\begin{ex}
Let $(X, \rRack)$ be a rack. Then we get a rack action of $X$ on its underlying set by defining $m\cdot x := m\rRack x, \ m, x\in X$. It is immediate that a finite subset $\{t_i\}_i\subset X$ is an approximate unit for this rack action if and only if it is a stabilizing family for the rack structure, hence it is a strong action. Furthermore, this action is $r$--periodic if and only if $X$ is an $r$--rack in the sense of Joyce~\cite{Joyce:Thesis}.
\end{ex}

\begin{lem}\label{lem:strong_action}
Let $X$ be a finite rack acting strongly on the set $M$. Let $x\in X$ be of order $k$ in $X$. Then, every element $m\cdot (x_i)_{i=1}^r$, where $x$ appears $k$ times in the sequence $x_1,\ldots, x_r$, can be written as $m\cdot (y_j)_{j=1}^{r-k}$. 
\end{lem}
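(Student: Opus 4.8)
The plan is to read the basic compatibility~\eqref{eq1:rack_action} as a commutation rule that lets me reorder the entries of the sequence $x_1,\dots,x_r$ while only the overtaken entries get modified. Writing~\eqref{eq1:rack_action} in the form
\[
(m'\cdot a)\cdot x = (m'\cdot x)\cdot (a\rRack x),
\]
shows that a copy of $x$ may be slid one step to the \emph{left} past a preceding entry $a$, at the cost of replacing $a$ by $a\rRack x$, \emph{while the moved $x$ itself is left unchanged}. This is the one feature I really need: sliding $x$ leftward is ``$x$-preserving'', whereas sliding it to the right would turn $x$ into some $x\rRack(\cdots)$.

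First I would unpack the hypothesis. Saying that $x$ has order $k$ means that the right-translation $R_x$ has order $k$, i.e. $\{x\}_{i=1}^k\in\cal S^k(X)$ is a stabilizing family (such a $k$ exists because $X$ is finite). Since the action is \emph{strong}, every stabilizing family is an approximate unit, so
\[
m'\cdot (x)_{i=1}^k = m' \qquad\text{for all } m'\in M.
\]
This is the cancellation I will invoke at the very end. Next I would establish a ``move-to-front'' sub-lemma by induction on the position $p$: if $x_p=x$, then
\[
m\cdot(x_1,\dots,x_r)=m\cdot\bigl(x,\,x_1\rRack x,\dots,x_{p-1}\rRack x,\,x_{p+1},\dots,x_r\bigr),
\]
each inductive step being a single application of the leftward swap above.

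Applying this sub-lemma repeatedly---always moving the \emph{leftmost} remaining copy of $x$ to just after the block of copies already collected at the front---I gather all $k$ occurrences of $x$ into the first $k$ slots without ever sliding one copy of $x$ past another. This yields
\[
m\cdot(x_1,\dots,x_r)=m\cdot\bigl(\underbrace{x,\dots,x}_{k},\,y_1,\dots,y_{r-k}\bigr)
\]
for suitable $y_1,\dots,y_{r-k}\in X$. Finally, bracketing off the first $k$ factors and using the cancellation $m\cdot (x)_{i=1}^k=m$ gives $m\cdot(x_i)_{i=1}^r = m\cdot(y_j)_{j=1}^{r-k}$, as required.

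The main obstacle is purely bookkeeping: I must guarantee that the $k$ collected copies remain \emph{literally} equal to $x$, since otherwise the approximate-unit identity does not apply. This is exactly why the swaps are performed leftward and why I collect the leftmost copy first---with this discipline no copy of $x$ is ever overtaken by another, so none of the collected entries is accidentally altered into some $x\rRack x$, and only the spectator entries (which become the $y_j$'s, whose precise values are irrelevant) are modified.
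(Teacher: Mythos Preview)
Your argument is correct and is precisely the approach the paper takes: use~\eqref{eq1:rack_action} as a transposition rule to bring the $k$ occurrences of $x$ into adjacent positions, then cancel them via the strong-action hypothesis. Your write-up is in fact more careful than the paper's one-line sketch, since you make explicit that the swaps must be performed \emph{leftward} (so the moved $x$ is preserved while only the overtaken entries are altered), a point the paper leaves implicit in the phrase ``until the $x$'s appear next to each other.''
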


\begin{proof}
Since $X$ is finite, every element $x\in X$ has a finite order; that is there exists $k$ such that $x$ is a $k$--stabilizer for $X$, hence since the action is strong, $m\cdot ^kx=m$ for all $m\in M$. Now the result follows by applying~\eqref{eq1:rack_action} to $m\cdot (x_i)_{i=1}^r$ repeatedly until the $x$'s appear next to each other.
\end{proof}

\section{Twisted actions and rack cross-products}\label{twist}

In this section we examine the case of rack actions on racks.  This generalizes the construction of extensions of racks using the notion of dynamical cocycles \cite{And-Grana}. Specifically, let $X$ and $Q$ be racks whose rack structures are indistinguishably denoted by $\rRack$. A rack action $Q\times X \ni (p,x)\mto p\cdot x \in Q$ is called an {\em $X$-action by rack automorphisms} on $Q$ if for each $x\in X$, the induced bijection 
\[
Q\ni p\mto p\cdot x\in Q
\]  
is a rack automorphism of $Q$. 

Throughout, given two sets $A$ and $B$, $\cal F(A,B)$ will denote the set of all maps from $A$ to $B$. 

\begin{df}\label{def:X-cocycle}
Let $X$ be a rack acting by automorphisms on the rack $Q$. An $X$--{\em cocycle} on $Q$ is a map 
\[
\begin{array}{lccc}
\partial :& X\times X & \rTo & \cal F(Q\times Q, Q)\\
 & (x,y) & \mto & \partial_{x,y}
\end{array}
\]
 such that
 \begin{itemize}
 \item[(i)] for each $x,y\in X$ and each $q\in Q$, the induced map $$\partial_{x,y}(-, q):Q \ni p\mto \partial_{x,y}(p,q)\in Q$$ is a rack automorphism;
 \item[(ii)] (Equivariance) for all $x,y\in X$ and $p,q\in Q$, we have 
 \begin{eqnarray}\label{eq1:X-cocycle}
 \partial_{x,y}(p\cdot t, q)=\partial_{x,y}(p,q)\cdot (t\rRack y),
 \end{eqnarray}
 for all $t\in X$;
 \item[(iii)](Cocycle condition) for all $x,y,z\in X$ and all $p,q,r\in Q$, the following relation holds
 \begin{eqnarray}\label{eq2:X-cocycle}
 \partial_{x\rRack y, z}(\partial_{x,y}(p,q),r) = \partial_{x\rRack z, y\rRack z}(\partial_{x,z}(p,r),\partial_{y,z}(q,r)).
 \end{eqnarray} 
 \end{itemize}
\end{df}

\begin{df}
Let $X$ be a rack. A {\em twisted $X$--action} on a rack $Q$ is a rack action of $X$ by automorphisms on $Q$ together with an $X$--cocycle on $Q$. The triple $(Q,X,\partial)$ will be called a {\em twisted rack dynamical system}.
\end{df}

We shall notice that the idea of {\em extension by dynamical cocycle} introduced by Andruskiewitsch and Gra\~{n}a in~\cite{And-Grana} is a special case of a twisted dynamical system in our sense. Indeed, when $Q$ is endowed with the trivial rack structure ({\em i.e.}, $p\rRack q=p$), and the trivial $X$--action, then relation~\eqref{eq1:X-cocycle} is trivially satisfied. 

\begin{ex}\label{eq:twisted_cocycle}
If $X$ acts by rack automorphism on the rack $Q$, then  define the map $\partial:X\times X\rTo \cal F(Q\times Q, Q)$ by 
\[
\partial_{x,y}(p,q):=p\cdot y, \ {\rm for \ } p,q\in Q.
\]
It is easy to check that $(Q,X,\partial)$ is a twisted rack dynamical system.
\end{ex}

\begin{df}
Let $(Q,X,\partial)$ be a twisted rack dynamical system. The {\em rack cross-product} $Q\rtimes_\partial X$ of $Q$ by $X$ is the Cartesian product $Q\times X$ together with the binary operation
\begin{eqnarray}\label{eq:semi_direct}
(p,x)\rRack_\partial (q,y) = (\partial_{x,y}(p,q),x\rRack y),
\end{eqnarray}
for $(p,x), (q,y)\in Q\times X$.
\end{df}

\begin{lem}
Let $X$ and $Q$ be as above. Then, together with the operation~\eqref{eq:semi_direct}, $Q\rtimes_\partial X$ is a rack.
\end{lem}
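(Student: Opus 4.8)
The plan is to verify directly that the binary operation $\rRack_\partial$ defined by equation~\eqref{eq:semi_direct} satisfies the two rack axioms: left-invertibility and the self-distributivity (rack) identity. These are exactly the conditions packaged into the hypotheses on $(Q,X,\partial)$, so the proof amounts to unwinding the definitions and checking that each axiom for $\rRack_\partial$ follows from a corresponding axiom for $X$, for the $X$-action, and for the cocycle $\partial$.

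First I would establish left-invertibility: for fixed $(q,y)$, I must show the map $(p,x)\mapsto(\partial_{x,y}(p,q),x\rRack y)$ is a bijection of $Q\times X$. The second coordinate $x\mapsto x\rRack y$ is a bijection since $X$ is a rack. For the first coordinate, condition (i) of Definition~\ref{def:X-cocycle} says $p\mapsto\partial_{x,y}(p,q)$ is a rack automorphism of $Q$, hence in particular a bijection, for each fixed $x,y,q$. Composing, one constructs the inverse explicitly: given $(p',x')$, recover $x$ as the unique preimage of $x'$ under $-\rRack y$, then recover $p$ as the unique preimage of $p'$ under $\partial_{x,y}(-,q)$. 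This gives a two-sided inverse, so $R_{(q,y)}$ is bijective.

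Next I would check the rack identity
\[
\big((p,x)\rRack_\partial(q,y)\big)\rRack_\partial(r,z) = \big((p,x)\rRack_\partial(r,z)\big)\rRack_\partial\big((q,y)\rRack_\partial(r,z)\big).
\]
Expanding both sides using~\eqref{eq:semi_direct}, the second coordinates agree precisely because $X$ itself satisfies the rack identity, giving $(x\rRack y)\rRack z=(x\rRack z)\rRack(y\rRack z)$ on both sides. For the first coordinates, the left side yields $\partial_{x\rRack y,\,z}(\partial_{x,y}(p,q),r)$ while the right side yields $\partial_{x\rRack z,\,y\rRack z}(\partial_{x,z}(p,r),\partial_{y,z}(q,r))$, and these are equal by the cocycle condition~\eqref{eq2:X-cocycle}. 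Thus the rack identity for $Q\rtimes_\partial X$ reduces cleanly to axiom (iii) of the cocycle.

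I expect the main subtlety — though not a genuine obstacle — to be pinning down exactly where the equivariance condition~\eqref{eq1:X-cocycle} and the full rack-action axiom~\eqref{eq1:rack_action} enter. In the bare verification of the two rack axioms above, equivariance does not seem to be used; its role is really to guarantee that the construction is well-behaved (for instance that each $R_{(q,y)}$ is a rack homomorphism, not merely a set bijection), and it is the natural compatibility that makes the cross-product functorial. I would therefore include a remark confirming that the self-distributivity and invertibility—the only two things needed to call $Q\rtimes_\partial X$ a rack—follow solely from conditions (i) and (iii) together with the rack structure of $X$, and note that the equivariance condition (ii) is what ties the action and cocycle together consistently. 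The calculation is entirely mechanical once the three cocycle conditions are lined up against the three places they are needed.
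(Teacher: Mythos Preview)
Your proposal is correct and is exactly the routine verification the paper has in mind: the paper's own proof reads simply ``Left to the reader,'' so your unwinding of the two rack axioms via condition~(i) for invertibility and condition~(iii) for self-distributivity is the intended argument. Your observation that the equivariance condition~(ii) is not needed for the bare rack axioms is also accurate.
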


\begin{proof}
Left to the reader.
\end{proof}

One can easily verify that given a twisted rack dynamical system $(Q,X,\partial)$, if the cross-product $Q\rtimes_\partial X$ is finitely stable, then so is $X$, but $Q$ may not be so. Precisely, we have the following

\begin{pro}
Let $(Q,X,\partial)$ be a twisted rack dynamical system. Then, the rack cross-product $Q\rtimes_\partial X$ is finitely stable if and only if there exists a finite family $(\xi_i, t_i)_{i=1}^n$ of elements in $Q\times X$ such that $\{t_i\}_i\in \cal S(X)$ and such that the function
\[
p\mto \partial_{x\rRack (t_i)_{i=1}^{n-1},t_n}(\partial_{x\rRack(t_i)_{i=1}^{n-2},t_{n-1}}(\ldots (\partial_{x\rRack t_1,t_2}(\partial_{x,t_1}(p,\xi_1),\xi_2))\ldots),\xi_n), 
\]
is the identity function on $Q$, for all $x\in X$.
\end{pro}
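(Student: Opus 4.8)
The plan is to compute the iterated cross-product $(p,x)\rRack_\partial (\xi_i,t_i)_{i=1}^n$ explicitly and then read off the stabilizing condition coordinate by coordinate. By the definition~\eqref{eq:semi_direct} of $\rRack_\partial$, the second coordinate of $(p,x)\rRack_\partial(q,y)$ is simply $x\rRack y$, so a one-line induction shows that the second coordinate of the full $n$-fold product is $x\rRack(t_i)_{i=1}^n$, depending only on the rack structure of $X$ and not on the $\xi_i$ or on $\partial$. All the content therefore lies in the first coordinate.

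First I would set up the induction on the first coordinate. Writing $P_k$ for the first coordinate of $(p,x)\rRack_\partial(\xi_i,t_i)_{i=1}^k$, we have $P_0=p$, and unwinding \eqref{eq:semi_direct} together with the evolution of the second coordinate computed above yields the recursion
\[
P_k = \partial_{x\rRack (t_i)_{i=1}^{k-1},\, t_k}(P_{k-1}, \xi_k), \qquad k=1,\ldots,n.
\]
The crucial point here --- and the only place care is genuinely needed --- is that at stage $k$ the left subscript of $\partial$ is $x\rRack(t_i)_{i=1}^{k-1}$ rather than $x$, because the first rack-component has already been shifted by $t_1,\ldots,t_{k-1}$. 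Iterating this recursion down to $k=1$ reproduces verbatim the nested expression $p\mapsto\partial_{x\rRack(t_i)_{i=1}^{n-1},t_n}(\ldots(\partial_{x\rRack t_1,t_2}(\partial_{x,t_1}(p,\xi_1),\xi_2))\ldots,\xi_n)$ appearing in the statement.

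With the closed form in hand, the equivalence is immediate. The fixed-point equation $(p,x)\rRack_\partial(\xi_i,t_i)_{i=1}^n=(p,x)$, required to hold for all $(p,x)\in Q\times X$, decouples into its two coordinates: the second-coordinate equation is $x\rRack(t_i)_{i=1}^n=x$ for all $x$, i.e. $\{t_i\}_{i=1}^n\in\cal S^n(X)$; the first-coordinate equation says precisely that the displayed function $p\mapsto P_n$ is the identity on $Q$ for every $x\in X$. Thus $\{(\xi_i,t_i)\}_{i=1}^n$ is a stabilizing family of order $n$ for $Q\rtimes_\partial X$ exactly when both conditions hold, and $Q\rtimes_\partial X$ is finitely stable if and only if such a family exists for some $n$, which is the assertion.

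I do not expect a serious obstacle: the statement is essentially a bookkeeping identity. In particular, the cocycle axioms (ii)--(iii) of Definition~\ref{def:X-cocycle} are not even invoked here --- we only use that $\rRack_\partial$ is a well-defined rack operation (the preceding lemma) together with the bare formula~\eqref{eq:semi_direct}. The sole risk is an off-by-one or mis-indexing error in the subscripts of the nested $\partial$'s during the induction, which is exactly why I would pin down the recursion for $P_k$ precisely before unrolling it.
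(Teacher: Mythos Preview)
Your argument is correct and is exactly the computation the paper has in mind: in fact the paper states this proposition without proof, immediately following the remark that ``one can easily verify'' the forward implication, so your write-up simply supplies the (straightforward) details that the authors omit. The coordinate-by-coordinate decoupling via the recursion $P_k=\partial_{x\rRack(t_i)_{i=1}^{k-1},\,t_k}(P_{k-1},\xi_k)$ is the only reasonable way to unwind the iterated operation~\eqref{eq:semi_direct}, and your observation that neither the equivariance nor the cocycle condition is needed here is accurate.
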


\begin{ex}
Let $(\cal H,\pi)$ be a representation of a finite group $G$ with non-trivial center. Denote $\xi g:=\pi_g(\xi)$ for $\xi\in \cal H, g\in G$. Equip $\cal H$ with the quandle structure $\xi\rRack \eta:=\frac{\xi + \eta}{2}$. Then $Conj(G)$ acts by rack automorphisms on $\cal H$ under the operation $\xi\cdot g:=\xi g^{-1}$. 

Fix a non-zero vector $v_0\in \cal H$, and let $\zeta_0:=\frac{1}{\#G}\sum_{g\in G}v_0g$. Then $\zeta_0\cdot h = \zeta_0$ for all $h\in G$. Let $t\neq 1$ be a solution of $x^3-1=0$ in $\bb C$, and for $g,h\in G$, let $\partial_{g,h}:\cal H\times \cal H\rTo \cal H$ be the function given by
\[
\partial_{g,h}(\xi,\eta) :=t\cdot \xi h^{-1} + (1-t)\zeta_0, \ \xi,\eta\in \cal H.
\]
One can verify that the triple $(\cal H, Conj(G), \partial)$ satisfies all the axioms of a twisted rack dynamical system. Furthermore, $\cal H\rtimes_\partial Cong(G)$ is finitely stable; although $\cal H$ is not finitely stable, thanks to Theorem~\ref{thm:Alexander}. To see this, let $g_1, g_2\in Z(G)$ and $g_3=(g_1g_2)^{-1}$, so that $(g_1,g_2,g_3)\in \cal S^3(Conj(G))$. Let $\eta_1,\eta_2, \eta_3$ in $\cal H$. Then for all $g\in G$, we get 

\[
\begin{array}{lcl}
\partial_{(g\rRack g_1)\rRack g_2,g_3}(\partial_{g\rRack g_1,g_2}(\partial_{g,g_1}(\xi,\eta_1),\eta_2),\eta_3) & = & \xi + (1-t)(t^2+t+1)\zeta_0 \\
& = & \xi 
\end{array}
\]
for all $\xi \in \cal H$; which implies that $(\eta_i,g_i)_i\in \cal S^3(\cal H\rtimes_\partial Conj(G))$.

\end{ex}

As a consequence of the above proposition, we have

\begin{cor}
Let $X$ be a rack acting by rack automorphisms on the rack $Q$. Let $Q$ be equipped with the $X$-cocycle $\partial$ of Example~\ref{eq:twisted_cocycle} ({\em i.e.} $\partial_{x,y}(p,q)=p \cdot y$). Then $Q\rtimes_\partial X$ is finitely stable if and only if there exists $\{t_i\}_i\in \cal S(X)$ which is an approximate unit for the rack action of $X$ on $Q$.
\end{cor}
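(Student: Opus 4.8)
The plan is to deduce this directly from the preceding Proposition by specializing to the cocycle $\partial_{x,y}(p,q)=p\cdot y$ of Example~\ref{eq:twisted_cocycle}. The crucial observation is that this particular $\partial_{x,y}$ depends neither on its first rack-index $x$ nor on its second $Q$-argument $q$; it records only the first $Q$-entry $p$ and the second rack-index $y$. Consequently, when I substitute it into the nested expression appearing in the Proposition, the whole composition should telescope into a single iterated rack action, stripped of all the auxiliary data.

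Concretely, I would evaluate the expression from the inside out. The innermost term is $\partial_{x,t_1}(p,\xi_1)=p\cdot t_1$. Feeding this into the next layer gives $\partial_{x\rRack t_1,t_2}(p\cdot t_1,\xi_2)=(p\cdot t_1)\cdot t_2$, and proceeding outward by a straightforward induction on the number of nestings, the entire function collapses to
\[
p\mto p\cdot (t_i)_{i=1}^n.
\]
In particular it no longer involves $x$ nor any of the auxiliary elements $\xi_1,\ldots,\xi_n$.

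Having established this reduction, the rest is immediate. The Proposition asserts that $Q\rtimes_\partial X$ is finitely stable precisely when there is a family $(\xi_i,t_i)_{i=1}^n$ with $\{t_i\}_i\in\cal S(X)$ for which the above function is the identity for every $x\in X$. Since that function equals $p\mto p\cdot(t_i)_i$ regardless of $x$ and of the $\xi_i$, the requirement that it be the identity on $Q$ is exactly the requirement $p\cdot(t_i)_i=p$ for all $p\in Q$, that is, that $\{t_i\}_i$ be an approximate unit for the rack action. The $\xi_i$ play no role, so their appearance in the Proposition becomes vacuous here: one direction extracts a stabilizing approximate unit from a stabilizing family trivializing the function, and the converse takes the given approximate unit $\{t_i\}_i$ together with any choice of the $\xi_i$.

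I expect the only delicate step to be the bookkeeping in the telescoping induction. One must check that at the $k$-th layer the first index $x\rRack(t_i)_{i=1}^{k-1}$ and the entry $\xi_k$ are genuinely discarded by the definition of $\partial$, so that each layer contributes exactly one additional factor $\cdot\, t_k$ to the iterated action, with nothing else accumulating. Because the cocycle's formula is explicit and manifestly independent of those data, this verification is routine rather than conceptual, and no appeal to the equivariance or cocycle conditions is needed beyond what already justified the Proposition itself.
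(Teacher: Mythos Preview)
Your argument is correct and matches the paper's approach: the corollary is stated there as an immediate consequence of the preceding Proposition, with no further proof given, and your telescoping computation is exactly the specialization needed to justify that claim. The observation that $\partial_{x,y}(p,q)=p\cdot y$ ignores both $x$ and $q$, so that the nested expression collapses to $p\cdot(t_i)_{i=1}^n$, is precisely the point.
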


\begin{ex}
Recall from~\cite{Elhamdadi-Moutuou:Foundations_Topological_Racks} that a left ideal in a rack $X$ is subrack $T$ of $X$ such that $T\rRack X\subseteq T$. Let $T$ be such a left ideal. Then the operation $T\times X\ni (t,x)\mto t\rRack x\in T$ defines an action of $X$ by rack automorphisms on $T$. Let $\partial_{x,y}(t,s):=t\rRack y$. Then the cross-product $T\rtimes_\partial X$ is finitely stable if and only if $X$ is. 
\end{ex}


\section{$X$--bundle of racks}\label{Xbundle}

In this section we examine the case of families of rack structures on a set parametrized by a given rack. More specifically, we introduce bundles of racks which are generalization of the notion of $G$--{\em families of quandles} as defined in~\cites{Ishii:G-family_Quandles, Nosaka:Quandles_Cocycles}.

\begin{df}
Let $X$ be a rack. An $X$--{\em bundle of racks} $(\cal A,\star_x^y)_{x,y\in X}$ consists of a set $\cal A$, and a family of racks structures $\star_x^y$ on $\cal A$ such that 
\begin{eqnarray}\label{eq:X-bundle}
(a\star_x^yb)\star_{x\rRack y}^z c = (a\star_x^zc)\star_{x\rRack z}^{y\rRack z} (b\star_y^z c),
\end{eqnarray}
for all $a,b,c\in \cal A$ and all $x,y,z\in X$.
\end{df}

\begin{lem}
Let $(Q,X,\partial)$ be a twisted rack dynamical system. Then the family $(Q,\partial_{x,y}(-, -))_{x,y\in X}$ is an $X$--{\em bundle of racks} if and only if for all $x,y\in X$ we have 
\begin{eqnarray}\label{eq:Cocycle-vs-X-bundle}
\partial_{x,y}(\partial_{x,y}(p,q),r) = \partial_{x,y}(\partial_{x,y}(p,r), \partial_{x,y}(q,r)), \forall p,q,r\in Q.
\end{eqnarray} 
\end{lem}

\begin{proof}
The identity~\eqref{eq:Cocycle-vs-X-bundle} means that the operation $p\star_x^yq:=\partial_{x,y}(p,q)$ defined on $Q$ is distributive. Moreover, the invertibility of the right multiplication $$-\star_x^y q: Q\ni p \mto p\star_x^y q\in Q$$ is automatic from Definition~\ref{def:X-cocycle}. Also, notice that equation~\eqref{eq:X-bundle} is satisfied through~\eqref{eq2:X-cocycle}. 
\end{proof}

\begin{df}
Let $X$ and $Y$ be racks, and $(\cal A, \star_x^y)_{x,y\in X}$ be an $X$--bundle of racks. For any rack morphism $f:Y\rTo X$, we define the {\em pull-back} of $(\cal A, \star_x^y)_{x,y\in X}$, denoted by $(f^\ast \cal A, \star_u^v)_{u,v\in Y}$ as the set $\cal A$ equipped with the family of binary operations $\star_u^v, u,v\in Y$, 
\[
a\star_u^vb:=a\star_{f(u)}^{f(v)}b, \ \ u, v\in Y, a,b\in \cal A.
\]
\end{df}

\begin{lem}
Let $X$, $Y$, $(\cal A, \star_x^y)_{x,y\in X}$, and $f:X\rTo Y$ be as above. Then the pull-back $(f^\ast A, \star_u^v)_{u,v\in Y}$ is a $Y$--bundle of racks. 
\end{lem}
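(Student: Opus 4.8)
The plan is to verify directly that the pull-back $(f^\ast \cal A, \star_u^v)_{u,v\in Y}$ satisfies the defining axiom~\eqref{eq:X-bundle} of a $Y$--bundle of racks, namely that for all $a,b,c\in \cal A$ and all $u,v,w\in Y$,
\[
(a\star_u^vb)\star_{u\rRack v}^w c = (a\star_u^wc)\star_{u\rRack w}^{v\rRack w} (b\star_v^w c).
\]
I would expand each side using the definition $a\star_u^vb:=a\star_{f(u)}^{f(v)}b$, which converts every operation indexed by elements of $Y$ into the corresponding operation of the original $X$--bundle indexed by their images under $f$.

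The key step is that $f$ is a rack morphism, so $f(u\rRack v)=f(u)\rRack f(v)$ and similarly $f(u\rRack w)=f(u)\rRack f(w)$ and $f(v\rRack w)=f(v)\rRack f(w)$. Substituting these identities, the left-hand side becomes
\[
(a\star_{f(u)}^{f(v)}b)\star_{f(u)\rRack f(v)}^{f(w)} c,
\]
while the right-hand side becomes
\[
(a\star_{f(u)}^{f(w)}c)\star_{f(u)\rRack f(w)}^{f(v)\rRack f(w)} (b\star_{f(v)}^{f(w)} c).
\]
These are precisely the two sides of axiom~\eqref{eq:X-bundle} for the original $X$--bundle $(\cal A,\star_x^y)_{x,y\in X}$ evaluated at the points $x=f(u)$, $y=f(v)$, $z=f(w)$, hence they are equal. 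One should also check the remaining conditions packaged into "bundle of racks": for each fixed pair $u,v\in Y$, the operation $\star_u^v$ must be a rack structure on $\cal A$, but this is immediate since $\star_u^v=\star_{f(u)}^{f(v)}$ is literally one of the rack structures of the original bundle.

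I do not expect any genuine obstacle here; the result is a formal compatibility statement and the proof is essentially a one-line substitution. The only point requiring the slightest care is bookkeeping of the subscripts and superscripts after applying the morphism property of $f$, to confirm that the substituted indices $f(u)\rRack f(v)$, $f(u)\rRack f(w)$, $f(v)\rRack f(w)$ match exactly the pattern demanded by~\eqref{eq:X-bundle}. Once the morphism identities are inserted in the correct slots, the equality is an instance of the hypothesis and nothing further is needed.
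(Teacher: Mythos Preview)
Your verification is correct and complete. The paper does not supply a proof for this lemma at all; it simply states it and moves on, implicitly leaving the routine check to the reader. Your argument---reducing both sides of the $Y$--bundle axiom to the corresponding instance of~\eqref{eq:X-bundle} at $x=f(u)$, $y=f(v)$, $z=f(w)$ via the rack morphism identity $f(u\rRack v)=f(u)\rRack f(v)$, and noting that each $\star_u^v=\star_{f(u)}^{f(v)}$ is automatically a rack structure---is exactly the intended one-line substitution.

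One small remark: the lemma as printed in the paper has a typographical slip in the direction of $f$ (it writes $f\colon X\rTo Y$ while the preceding definition has $f\colon Y\rTo X$). You have correctly followed the definition, with $u,v,w\in Y$ mapping to $f(u),f(v),f(w)\in X$, so the indices land in the right rack and the appeal to~\eqref{eq:X-bundle} is legitimate.
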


As we already mentioned in the beginning of the section, $G$--families of quandles are special cases of our bundles of racks. Specifically we have the following example.

\begin{ex}
Recall from~\cite{Ishii:G-family_Quandles} that given a group $G$, a $G$--{\em family of quandles} is a non-empty set $X$ together with a family of quandle structures $\rRack^g$, indexed by $G$, on $X$, satisfying the following equations:
\begin{itemize}
\item $x\rRack^{gh}y =  (x\rRack^gy)\rRack^hy$, and $x\rRack^ey=y$, for all $x,y\in X,  \ g,h\in G$;
\item $(x\rRack ^gy)\rRack^hz = (x\rRack^hz)\rRack^{h^{-1}gh}(y\rRack^hz)$, for all $x,y,z\in X, g,h\in G$.
\end{itemize} 

Now, given such a $G$--family of quandles, we construct the $Conj(G)$--bundle of racks $(X,\star_g^h)_{g,h\in G}$ by letting 
\[
x\star_g^hy:= x\rRack^hy, x,y\in X, g,h\in G.
\]
\end{ex}

We shall also note that our definition of bundles of racks generalizes the notion of $Q$--family of quandles (where $Q$ is a quandle) defined in~\cite[p.819]{Ishii:G-family_Quandles}. Indeed, by using the same construction as in the above example, one can easily show that any $Q$--family of quandles defines a $Q$--bundle of racks (quandles).
\section{Rack representations}\label{rackrep}

In this section we introduce first elements of representation theory for abstract racks and quandles. 

\begin{df}
A \emph{representation} of a rack $X$ is a vector space $V$ equipped with a rack action of $X$ such that for all $x \in X$, the induced map $V \ni v \mapsto v \cdot x \in V$ is an isomorphism of vector spaces. Equivalently, a representation of $X$ consists of a vector space $V$ and a rack homomorphism 
\[
\pi: X\rTo Conj(GL(V));
\]
{\em i.e.}, $\pi_{x\rRack y} = \pi_y\pi_x\pi_y^{-1}, \ \forall x, y\in X$. The map $\pi$ will be denoted as $\pi^V$ where there likely to be a confusion.
\end{df}

\begin{ex}
Every representation $(V,\rho)$ of a group $G$ naturally defines representation of the quandle $Conj(G)$. 
\end{ex}

Let us give an example of rack representation analogous to the regular representation in group theory. 

\begin{ex}\label{ex:regular}
Let $X$ be a finite rack and let $\bb C X$ be the vector space of complex valued functions on $X$, seen as the space of formal sums $f = \sum_{x\in X} a_xx$, where $a_x\in \bb C, x\in X$.

We construct the {\em regular representation} of the rack $X$ 
\[
\lambda : X\rTo Conj(GL(\bb CX))
\] 
by $\lambda_t(f)(x):=f(R_t^{-1}(x))$, where as usual $R_t$ is the right "translation" $X\ni y\rTo y\rRack t\in X$.  
\end{ex}

\begin{df}
Let $V$ and $W$ be representations of the rack $X$. A linear map $\phi: V\rTo W$ is called $X$-\emph{linear} if for each $x\in X$ the following diagram commutes
\[
\xymatrix{
V \ar[r]^{\pi^V_x} \ar[d]_{\phi} & V \ar[d]^{\phi} \\
W \ar[r]^{\pi^W_x} & W
}
\]
If in addition $\phi$ is an isomorphism, the two representations $V$ and $W$ are said to be \emph{equivalent}, in which case $\phi$ is called an {\em equivalence of rack representations}.
\end{df}

If $V$ is a representation of the rack $X$, a subspace $W\subset V$ is called a {\em subrepresentation} if for all finite family $\{x_i\}_i\subseteq X$, $W\cdot (x_i)_i \subseteq W$. We have the following immediate observation.

\begin{lem}\label{lem:subrep_Ker_Im}
Let $V$ and $W$ be representations of the rack $X$ and let $\phi:V\rTo W$ be a linear map. If $\phi$ is $X$-linear, then $\ker \phi$ and ${\rm Im\ } \phi$ are subrepresentations of $V$ and of $W$, respectively. 
\end{lem}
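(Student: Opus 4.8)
The plan is to verify both claims directly from the definition of $X$--linearity, the key ingredient being that intertwining with respect to single elements propagates to intertwining with respect to arbitrary finite families. First I would record the standard fact that, since $\phi$ is linear, $\ker\phi$ and $\mathrm{Im}\,\phi$ are linear subspaces of $V$ and $W$ respectively; what then remains is to check that each is stable under the action of finite families of elements of $X$, which is precisely the closure condition in the definition of a subrepresentation.

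The main technical step is the following iterated intertwining identity: for every finite family $\{x_i\}_{i=1}^s\subseteq X$ and every $v\in V$,
\[
\phi\big(v\cdot (x_i)_{i=1}^s\big) = \phi(v)\cdot (x_i)_{i=1}^s.
\]
I would prove this by induction on $s$. The base case $s=1$ is exactly the commutativity of the square defining $X$--linearity, namely $\phi(v\cdot x)=\phi(v)\cdot x$. For the inductive step I would write $v\cdot(x_i)_{i=1}^{s}=\big(v\cdot(x_i)_{i=1}^{s-1}\big)\cdot x_s$, apply the base case to the element $v\cdot(x_i)_{i=1}^{s-1}$, and then invoke the inductive hypothesis; this is the only place where a short computation intervenes.

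With this identity in hand both conclusions are immediate. For the kernel, let $v\in\ker\phi$; since each action map $w\mapsto w\cdot x$ on $W$ is a linear isomorphism, the composite action by the family $(x_i)_i$ is linear and hence fixes $0$, so $\phi(v)\cdot(x_i)_i = 0\cdot(x_i)_i = 0$, whence $\phi\big(v\cdot(x_i)_i\big)=0$ and $v\cdot(x_i)_i\in\ker\phi$. For the image, let $w=\phi(v)\in\mathrm{Im}\,\phi$; then $w\cdot(x_i)_i=\phi(v)\cdot(x_i)_i=\phi\big(v\cdot(x_i)_i\big)\in\mathrm{Im}\,\phi$. Thus $\ker\phi$ and $\mathrm{Im}\,\phi$ are stable under all finite-family actions, and are therefore subrepresentations of $V$ and $W$ respectively.

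I do not expect any genuine obstacle here: the statement is the rack analogue of the classical fact that kernels and images of module homomorphisms are submodules, and the only nontrivial point is the bookkeeping of the induction, together with the remark that stability under single elements already forces stability under arbitrary finite families, since the latter are merely iterated compositions of the former.
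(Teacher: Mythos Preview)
Your argument is correct and is exactly the standard verification the paper has in mind; note that the paper does not actually supply a proof, labeling the lemma an ``immediate observation,'' so your write-up simply makes explicit the routine induction and linearity checks behind that remark.
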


\begin{df}
A rack representation $V$ of $X$ is said to be {\em irreducible} if it has no proper subrepresentation; {\em i.e.}, if $W$ is a subrepresention of $V$, then either $W=\{0\}$ or $W=V$.
\end{df}

\section{Strong Representations}\label{strong_rep}

\begin{df}
A {\em strong representation} of $X$ is a representation $V$ such that the rack action is strong.

We denote by $Rep_s(X)$ the set of equivalence classes of strong irreducible finite dimensional representations of $X$. 
\end{df}

One can check that as in the group case, $Rep_s(X)$ is an abelian group under tensor product of strong irreducible representations. 

\begin{rmk}
In view of Proposition~\ref{pro:core_n}, we notice that if $G$ is an abelian group, the only strong representation of $Conj(G)$ is the trivial one.
\end{rmk}

\begin{ex}
The regular representation $ (\bb{C}X, \lambda)$ of a rack $X$ defined in Example \ref{ex:regular} is clearly strong.
\end{ex}

\begin{ex}
Let $(\bb{Z}_3, \rRack)$, with $x \rRack y=2y-x$, be the dihedral quandle. Define $\rho: \bb{Z}_3 \rTo Conj(GL(\bb{C}^3))$  as the rack representation induced by the reflections on $\bb C^3=Span\{e_1,e_2,e_3\}$
\[
\rho_0= (2 \; 3), \; \rho_1=(1 \; 3),\; \text{and}\; \rho_2=(1 \; 2).
\]
Then $\rho$ is a strong representation of $\bb{Z}_3$ as a quandle, although it is clearly not a group representation. Note, however, that this is a reducible representation; indeed, the one-dimensional subspace spanned by $(1,1,1)$ is a subrepresentation.
\end{ex}

\begin{ex}\label{ex:involution_rep}
Suppose $X$ is an involutive rack, that is, $(x\rRack y)\rRack y=x$ for all $x,y\in X$; which means each $y\in X$ is a $2$--stabilizer for $X$. Then every pair $(V,\tau)$, where $V$ is a vector space and $\tau:V\rTo V$ is a linear involution, gives rise to a strong representation $\tilde{\tau}:X\rTo Conj(GL(V))$ by setting $\tilde{\tau}_x(v)=\tau(v)$ for all $x\in X, v\in V$.
\end{ex}

 In fact we have the following.
\begin{lem}\label{lem:inv_rep}
Let $X$ be an involutive rack. The assignment $(V,\tau)\mto \tilde{\tau}$ defines a covariant functor from the category $\cal V_{inv}$, whose objects are pairs $(V,\tau)$ of vector spaces equipped with involutions and whose morphisms are vector space morphisms intertwining the involutions, to the category of strong representations of $X$.
\end{lem}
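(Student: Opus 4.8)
The plan is to specify the functor on morphisms and then verify the two functoriality axioms, the only non-formal point being that the assignment on morphisms actually lands among $X$--linear maps. On objects the functor is already given by Example~\ref{ex:involution_rep}: to a pair $(V,\tau)$ it assigns the strong representation $\tilde{\tau}:X\rTo Conj(GL(V))$ with $\tilde{\tau}_x=\tau$ for every $x\in X$. On morphisms, given an arrow $\phi:(V,\tau)\rTo(W,\sigma)$ in $\cal V_{inv}$ --- that is, a linear map satisfying the intertwining relation $\phi\circ\tau=\sigma\circ\phi$ --- I would simply assign the same underlying linear map $\phi$, now regarded as a candidate morphism $\tilde{\tau}\rTo\tilde{\sigma}$ of representations.

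First I would check that $\phi$ is indeed $X$--linear in the sense of the definition preceding Lemma~\ref{lem:subrep_Ker_Im}. For each $x\in X$ the square to be verified reads $\phi\circ\tilde{\tau}_x=\tilde{\sigma}_x\circ\phi$; since $\tilde{\tau}_x=\tau$ and $\tilde{\sigma}_x=\sigma$ independently of $x$, this is exactly the intertwining relation $\phi\circ\tau=\sigma\circ\phi$ defining a morphism of $\cal V_{inv}$. Thus the single defining condition of $\cal V_{inv}$ encodes $X$--linearity for all $x\in X$ simultaneously, which is the entire content of the statement.

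Finally, functoriality is immediate because the assignment on morphisms is the identity on underlying linear maps: the identity $\Id_V$ of $(V,\tau)$ is sent to $\Id_V$, which is the identity morphism of the representation $\tilde{\tau}$, and for composable arrows $(V,\tau)\xrightarrow{\phi}(W,\sigma)\xrightarrow{\psi}(U,\rho)$ the composite $\psi\circ\phi$ is sent to $\psi\circ\phi$, the composite of the images. I expect no genuine obstacle here: the only step with any content is the observation of the previous paragraph that each $\tilde{\tau}$ is constant in $x$, so that an intertwiner of involutions is automatically a morphism of the associated strong representations, and conversely the identity and composition laws are inherited verbatim from the category of vector spaces.
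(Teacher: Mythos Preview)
Your proposal is correct. The paper states this lemma without proof, so there is nothing to compare against; your argument supplies exactly the routine verification one expects: the functor is the identity on underlying linear maps, $X$--linearity reduces to the single intertwining condition because $\tilde{\tau}_x$ is constant in $x$, and the identity and composition axioms are inherited from vector spaces.
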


\begin{pro}
Suppose the rack $X$ is finite, involutive, and connected; that is, $X$ has only one orbit. Then, the regular representation of $X$ corresponds to a conjugacy class of the symmetric group $\frak S_n$ where $n={\#X}$. More generally, if $X$ has $k$ connected components, then the regular representation corresponds to $k$ conjugacy classes in $\frak S_n$.  
\end{pro}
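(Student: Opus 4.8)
The plan is to unravel what the regular representation $\lambda$ of Example~\ref{ex:regular} looks like concretely as a map $X\rTo Conj(GL(\bb CX))$, and show that each $\lambda_t$ is (in a suitable basis) a permutation matrix whose cycle type is determined by the action of $X$ on itself. Since $\bb CX$ has the natural basis $\{\d_x\}_{x\in X}$ indexed by the elements of $X$, the operator $\lambda_t$ is defined by $\lambda_t(f)(x)=f(R_t^{-1}(x))$, where $R_t:y\mto y\rRack t$ is the right translation. Because $R_t$ is a bijection of the finite set $X$ (invertibility in the left variable), $\lambda_t$ permutes the basis vectors $\{\d_x\}$, so each $\lambda_t\in GL(\bb CX)$ is literally a permutation matrix, i.e. an element of the subgroup $\fr S_n\hookrightarrow GL(\bb CX)$ with $n=\#X$.

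\textbf{First} I would record that $\lambda$ factors through the symmetric group: the assignment $t\mto R_t$ is exactly the map $X\rTo Conj(\fr S_n)$ sending $t$ to the permutation $R_t^{-1}$ (or $R_t$), and the rack axiom $(x\rRack y)\rRack z=(x\rRack z)\rRack(y\rRack z)$ translates into the conjugation identity $R_{x\rRack y}=R_y R_x R_y^{-1}$, which is precisely the defining condition $\pi_{x\rRack y}=\pi_y\pi_x\pi_y^{-1}$ for a rack homomorphism into $Conj(\fr S_n)$. \textbf{Next}, I would invoke the hypothesis that $X$ is involutive and connected. Connectedness means the inner automorphism group (generated by the $R_t$) acts transitively on $X$; involutivity means each $R_t$ has order $2$, i.e. each $R_t$ is a product of disjoint transpositions (a fixed-point-free or partly-fixed involution). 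The key point is that all the $R_t$ are conjugate to one another in $\fr S_n$: connectedness gives, for any $s,t\in X$, an inner automorphism carrying $s$ to $t$, and this same inner automorphism (being a composite of right translations) conjugates $R_s$ to $R_t$. Hence the image $\{R_t:t\in X\}$ lies in a single conjugacy class of $\fr S_n$, and the regular representation ``corresponds to'' that one conjugacy class.

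\textbf{For the general case}, when $X$ has $k$ connected components $X_1,\ldots,X_k$, the same conjugation argument applies within each component: all $R_t$ with $t$ in a fixed component $X_j$ are mutually conjugate and thus determine a single conjugacy class $C_j\subset\fr S_n$, but translations coming from different components need not be conjugate. So the regular representation naturally picks out the $k$ conjugacy classes $C_1,\ldots,C_k$, one per orbit. The statement that this is a \emph{correspondence} should be read as: the map $t\mto[R_t]$ (conjugacy class of $R_t$ in $\fr S_n$) is constant on each connected component and takes exactly $k$ values.

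\textbf{The main obstacle} is pinning down precisely what ``corresponds to a conjugacy class'' is meant to assert, since a representation is a homomorphism rather than a single group element; the substantive content is the \emph{conjugacy invariance} statement, namely that $R_s$ and $R_t$ are conjugate in $\fr S_n$ exactly when $s$ and $t$ lie in the same orbit. The forward direction (same component $\Rightarrow$ conjugate) is the connectedness/inner-automorphism argument above and is the crux; I would make sure the inner automorphism realizing $s\mapsto t$ is genuinely an element of $\fr S_n$ conjugating $R_s$ to $R_t$, which follows from the functoriality $R_{\phi(x)}=\phi R_x\phi^{-1}$ for $\phi$ a composite of right translations. The involutivity hypothesis is what guarantees these permutations have a clean, uniform cycle type (products of $2$-cycles), making the phrase ``a conjugacy class'' literally meaningful and tying back to Example~\ref{ex:involution_rep}.
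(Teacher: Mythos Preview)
Your argument is correct and follows essentially the same route as the paper's proof: both observe that the $\lambda_t$ are involutions on $\bb CX$, that connectedness together with the rack identity $R_{x\rRack y}=R_yR_xR_y^{-1}$ forces all of them to be conjugate, and that a common cycle type is precisely a conjugacy class of $\fr S_n$. Your version is more explicit in identifying each $\lambda_t$ with a permutation matrix and in spelling out the inner-automorphism conjugation, whereas the paper compresses this into the phrase ``the involutions $\lambda_t$ are all equivalent, hence they define a partition of $n$'' and then invokes the partition/conjugacy-class bijection; but the substance is identical.
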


\begin{proof}
Since the representation $\lambda\colon X\rTo Conj(GL(\bb CX))$ is strong and $X$ is connected, the involutions $\lambda_t:\bb CX\rTo \bb CX$ are all equivalent, hence they define a partition of the dimension $n$ of $\bb CX$. The result then follows from the correspondence between partitions of $n$ and conjugacy classes of the symmetric group. 
\end{proof}

\begin{thm}\label{thm:irr-stable}
Let $X$ be a finite rack. Then every irreducible strong representation of $X$ is either trivial or finite dimensional.
\end{thm}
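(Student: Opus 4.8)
The plan is to reduce the statement to a question about the linear group generated by the action, and then to exploit the finiteness of $X$ twice: once to bound the orders of the acting operators, and once to bound the sizes of their conjugacy classes.

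First I would set $G:=\langle \pi_x : x\in X\rangle \leq GL(V)$ and observe that a subspace $W\subseteq V$ is a subrepresentation of $X$ exactly when it is $G$-invariant. Indeed $v\cdot (x_i)_{i=1}^s=\pi_{x_s}\cdots\pi_{x_1}(v)$, so any subrepresentation is stable under every $\pi_x$. Here strongness enters crucially: since $X$ is finite, each right translation $R_x$ is a permutation of the finite set $X$, so $R_x^{\,k}=\Id_X$ for some $k=k_x$, which means $\{x\}_{i=1}^{k_x}\in \cal S^{k_x}(X)$; strongness forces this stabilizing family to be an approximate unit, i.e. $\pi_x^{k_x}=\Id_V$. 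Consequently $\pi_x^{-1}=\pi_x^{k_x-1}$ is a product of $\pi_x$'s, so every subrepresentation is stable under $\pi_x^{-1}$ as well, hence under all of $G$. Thus $V$ is irreducible as a rack representation if and only if it is a simple $\bb C[G]$-module.

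Next I would use the rack-homomorphism identity $\pi_{x\rRack y}=\pi_y\pi_x\pi_y^{-1}$. It shows that conjugation by any generator $\pi_y$ sends the finite set $S:=\{\pi_x : x\in X\}$ into itself; being an automorphism of $G$ it is injective, hence a bijection of the finite set $S$, so $S$ is stable under conjugation by all of $G$. Therefore each $\pi_x$ has a finite conjugacy class (contained in $S$), its centralizer $C_G(\pi_x)$ has finite index, and $Z(G)=\bigcap_{x\in X}C_G(\pi_x)$ is a finite intersection of finite-index subgroups. Hence $N:=[G:Z(G)]<\infty$. To conclude, I would invoke Schur's lemma: since $V$ is simple over $G$, every central element of $G$ is an $X$-linear automorphism of $V$, hence acts as a scalar. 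Choosing coset representatives $g_1,\dots,g_N$ of $Z(G)$ and any $0\neq v\in V$, each $gv$ is a scalar multiple of some $g_iv$, so the cyclic submodule $\operatorname{span}(Gv)$ has dimension at most $N$; being a nonzero subrepresentation it must equal $V$, whence $\dim V\leq N<\infty$.

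The main obstacle is this last step. Over an infinite-dimensional $V$ the elementary form of Schur's lemma (proved earlier for racks) only guarantees that the commutant $\operatorname{End}_X(V)$ is a division algebra over $\bb C$, not that it reduces to the scalars; obtaining the scalar conclusion requires the refinement valid over $\bb C$ for the countable-dimensional algebra $\bb C[G]$ (Dixmier's version of Schur's lemma). This is precisely where the dichotomy in the statement originates: for a genuinely nontrivial action the scalar conclusion gives finite-dimensionality as above, while the only remaining possibility is the trivial action $\pi_x=\Id$ for all $x$, whose single irreducible instance is one-dimensional. I would therefore organise the write-up so that the finite-index-centre argument is isolated as the structural core, with the Schur step carrying the analytic subtlety that accounts for the two alternatives.
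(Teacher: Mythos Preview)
Your argument is correct, but it follows a genuinely different route from the paper's.

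The paper's proof is purely combinatorial and self--contained. Fixing a nonzero $v\in V$, it considers the span $E_v$ of all vectors $v\cdot (x_i)_{i=1}^s$ with $s\le (n+1)!$, where $n=\#X$. This subspace is finite--dimensional by construction, and it is invariant: acting by one more element produces a word of length at most $(n+1)!+1$, in which some $x$ occurs at least $n!$ times by pigeonhole; since $\pi_x^{n!}=\Id_V$ (strongness), Lemma~\ref{lem:strong_action} allows one to shuffle those occurrences together and cancel them, dropping the length back below $(n+1)!$. Irreducibility then forces $V=E_v$.

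Your approach is group--theoretic: you pass to $G=\langle \pi_x:x\in X\rangle$, observe that the finite generating set $S=\{\pi_x\}$ is stable under conjugation in $G$, deduce $[G:Z(G)]<\infty$, and then finish with a Schur--type argument. This is more structural and yields a far sharper bound (at most $[G:Z(G)]\le |S|!\le n!$, versus roughly $n^{(n+1)!}$ for the paper's method), at the cost of importing Dixmier's lemma. In fact you can dispense with Dixmier altogether: once $[G:Z(G)]<\infty$, Schur's theorem makes $[G,G]$ finite, while $G^{\mathrm{ab}}$ is a finitely generated abelian group with torsion generators (each $\pi_x$ has finite order), hence finite; thus $G$ itself is finite and ordinary finite--group representation theory gives the conclusion directly.

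One small expository point: your closing remark that the ``dichotomy originates'' in the Schur step is misleading. The trivial case $\pi_x=\Id$ for all $x$ is already covered by your main argument (then $G$ is trivial and $\dim V\le 1$); the ``either trivial or finite dimensional'' phrasing in the statement is essentially redundant, since an irreducible trivial representation is one--dimensional anyway.
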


\begin{proof}
Let $X$ be a finite rack of cardinality $n$ and let $(V,\pi)$ be a nontrivial irreducible strong representation of $X$. Then, since every element $x\in X$ is a $k$--stabilizer for $X$ where $k$ is a factor of $n!$, we have $\pi_x^{n!}=\Id_V$ for each $x\in X$. Fix a non-zero vector $v\in V$ and let $E_v$ be the subspace
\[
{\rm Span}\left\{v\cdot (x_i)_{i=0}^s\mid s=0,\ldots, (n+1)!, \{x_i\}_{i=1}^s\subseteq X\right\}
\]
of $V$, where we have used the convention that $v\cdot \emptyset =v$. Then $E_v$ is a finite dimensional complex vector space. Furthermore, $E_v$ is invariant under the rack $X$--action, thanks to Lemma~\ref{lem:strong_action} and to the fact that any sequence of $(n+1)!$ elements in $X$ has at least one element repeated at least $n!$ times. This means that $E_v$ is a subrepresentation of $V$. Therefore, since $V$ is irreducible and $E_v\neq \{0\}$, we have $V=E_v$.
\end{proof}

We shall observe that the case of involutive racks the above result becomes more precise with regard to the dimension of the irreducible representations. 

\begin{lem}\label{lem:involutive_rack}
Suppose $X$ is an involutive rack (finite or infinite). Every irreducible strong representation of the form $(V,\tilde{\tau})$ coming from the category $\cal V_{inv}$ is one-dimensional.
\end{lem}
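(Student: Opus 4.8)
The plan is to observe that in the representation $\tilde{\tau}$ every element of $X$ acts by the \emph{same} involution $\tau$, so that the rack-action structure collapses to the plain linear algebra of $\tau$. Concretely, recall from Example~\ref{ex:involution_rep} that $v\cdot x=\tilde{\tau}_x(v)=\tau(v)$ independently of $x$; hence for any finite family $\{x_i\}_{i=1}^s\subseteq X$ and any subspace $W\subseteq V$ we have $W\cdot (x_i)_i=\tau^s(W)$, which equals $W$ or $\tau(W)$ according to the parity of $s$ (using $\tau^2=\Id$). Consequently a subspace $W$ is a subrepresentation of $(V,\tilde{\tau})$ if and only if it is $\tau$-invariant, and irreducibility of $(V,\tilde{\tau})$ is precisely the statement that $V$ has no proper nonzero $\tau$-invariant subspace.

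Next I would exploit that $\tau$ is a linear involution of a complex vector space. Since $\tau^2=\Id$, one has the eigenspace decomposition $V=V_+\oplus V_-$ with $V_\pm=\ker(\tau\mp\Id)$; this is valid in arbitrary (finite or infinite) dimension over $\bb C$, indeed over any field of characteristic $\neq 2$, because every $v\in V$ splits as $v=\tfrac12(v+\tau v)+\tfrac12(v-\tau v)$ with the two summands lying in $V_+$ and $V_-$ respectively. Both $V_+$ and $V_-$ are $\tau$-invariant, hence are subrepresentations by the previous paragraph. If both were nonzero, each would be a proper nonzero subrepresentation, contradicting irreducibility; so exactly one of them is all of $V$, i.e.\ $\tau=\Id$ or $\tau=-\Id$ on $V$.

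Finally, once $\tau=\pm\Id$, \emph{every} subspace of $V$ is $\tau$-invariant and therefore a subrepresentation. Irreducibility then forces $V$ to have no proper nonzero subspace at all, i.e.\ $\dim V\leq 1$; and since a (nontrivial) irreducible representation is nonzero, we conclude $\dim V=1$.

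I do not expect a genuine obstacle here: the entire content of the lemma is the initial reduction identifying subrepresentations of $\tilde{\tau}$ with $\tau$-invariant subspaces, after which the statement is the elementary fact that an involution of a complex vector space with no proper invariant subspace must act as a scalar on a line. The only point deserving a word of care is that no finite-dimensionality of $V$ is needed, since the eigenspace splitting of an involution is available over $\bb C$ in every dimension.
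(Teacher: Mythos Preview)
Your proof is correct and follows essentially the same approach as the paper's: both exploit the eigenspace decomposition of the involution $\tau$ to produce a one-dimensional invariant subspace. The only cosmetic difference is that the paper picks a nonzero vector $v_0\in V_+$ directly (when $V_+\neq 0$) and observes $\mathrm{Span}\{v_0\}$ is already a subrepresentation, whereas you first reduce to $\tau=\pm\Id$ and then conclude; the underlying idea is identical.
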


\begin{proof}
Let $V_+:=\{v\in V\mid \tau(v)=v\}$. If $V_+$ is a nonzero space, and $v_0$ is a nonzero vector in $\in V_+$, then the subspace $Span\{v_0\}$ is clearly a subrepresentation of $V$, hence $V$ is one-dimensional. 
On the other hand, if $V_+=\{0\}$, then $\tau(v)=-v$ for all $v\in V$. Hence each nonzero vector $v \in V$ spans a subrepresention of $V$. Therefore, since $V$ is irreducible, it spans $V$.
\end{proof}

\begin{cor}\label{cor1:involutive_rack}
Let $X$ be a connected involutive rack. If $(V,\pi)$ is a strong irreducible representation satisfying the property that there exists an $x_0\in X$ such that $V^{x_0}_+=\{v\in V\mid \pi_{x_0}(v)=v\}=0$, then $V$ is one-dimensional. 
\end{cor}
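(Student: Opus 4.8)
The plan is to show that the two hypotheses together pin down every $\pi_x$ to be the scalar $-\Id_V$, after which the conclusion is immediate from Lemma~\ref{lem:involutive_rack}.

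First I would exploit involutivity and strongness at the single point $x_0$. Since $X$ is involutive, each $y\in X$ is a $2$--stabilizer, so $\{y,y\}\in\cal S^2(X)$; as the representation is strong, this stabilizing family acts as the identity, whence $\pi_y^2=\Id_V$ for every $y$. In particular $\pi_{x_0}$ is a linear involution, so $V$ splits as the direct sum of its $(+1)$-- and $(-1)$--eigenspaces $V^{x_0}_+\oplus V^{x_0}_-$. The hypothesis $V^{x_0}_+=0$ then forces $V=V^{x_0}_-$, that is $\pi_{x_0}=-\Id_V$.

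Next I would propagate this value along the orbit of $x_0$ using connectedness. Because $\pi$ is a homomorphism into $Conj(GL(V))$ we have $\pi_{x\rRack y}=\pi_y\pi_x\pi_y^{-1}$, so applying a right translation $R_y$ (or its inverse) to the index conjugates $\pi_x$ by $\pi_y$ (resp. $\pi_y^{-1}$); in either case the $GL(V)$--conjugacy class of $\pi_x$ is unchanged. Since $X$ is connected it consists of a single orbit, so every $x\in X$ is reachable from $x_0$ by such translations, and therefore $\pi_x$ is conjugate to $\pi_{x_0}=-\Id_V$. But $-\Id_V$ is a central scalar, so its conjugacy class is the singleton $\{-\Id_V\}$, giving $\pi_x=-\Id_V$ for all $x\in X$. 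The representation $(V,\pi)$ is thus exactly of the form $(V,\tilde\tau)$ from Example~\ref{ex:involution_rep} with the involution $\tau=-\Id_V$, so Lemma~\ref{lem:involutive_rack} applies and yields $\dim V=1$. (Equivalently, once every $\pi_x$ is $-\Id_V$ one sees directly that every subspace $W\subseteq V$ satisfies $W\cdot(x_i)_i=(-1)^sW=W$ and is hence a subrepresentation, so irreducibility forces $\dim V\le 1$.)

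I do not expect a genuine obstacle here; the only step needing care is the conjugacy bookkeeping in the connectedness argument, namely checking that both $R_y$ and $R_y^{-1}$ preserve the conjugacy class of $\pi_{x_0}$, and observing that the centrality of $-\Id_V$ collapses that class to a single point. These are exactly the features that make the scalar value survive transport around the orbit.
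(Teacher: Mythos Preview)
Your proof is correct and follows essentially the same route as the paper's: deduce $\pi_{x_0}=-\Id_V$ from the eigenspace decomposition, use connectedness to see that every $\pi_x$ is conjugate to $\pi_{x_0}$, observe that the scalar $-\Id_V$ has a one-point conjugacy class so $\pi_x=-\Id_V$ for all $x$, and then invoke Lemma~\ref{lem:involutive_rack}. Your write-up is in fact slightly more explicit than the paper's on two points---why strongness gives $\pi_y^2=\Id_V$, and why one may need a chain of right translations (and their inverses) rather than a single one in the connectedness step---but the argument is the same.
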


\begin{proof}
We write $V=V^{x_0}_+\oplus V^{x_0}_-$ where the second summand is the subspace of $V$ consisting of vectors $v$ such that $\pi_{x_0}(v)=-v$. The assumption amounts to saying $\pi_{x_0}=-\id_V$. Now, since $X$ is connected, every $z\in X$ can be written as $z=x_0\rRack y$ for some $y\in X$. Hence $\pi_z=\pi_y\pi_{x_0}\pi_y$, which means $\pi_z$ and $\pi_{x_0}$ are conjugate involutions on $V$. Therefore the representation$(V,\pi)$ comes from a fixed involution as in the lemma.
\end{proof}

\begin{thm}\label{thm:strong_rep_connected}
Every irreducible strong representation of an involutive connected finite rack $X$ is one-dimensional.
\end{thm}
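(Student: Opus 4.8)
The plan is to reduce the statement to the two special cases already in hand: the single-involution Lemma~\ref{lem:involutive_rack} and Corollary~\ref{cor1:involutive_rack}, using Theorem~\ref{thm:irr-stable} and a Schur-type averaging argument as the bridge. Let $(V,\pi)$ be a nontrivial irreducible strong representation of $X$; the purely trivial case forces $\dim V=1$ at once, since then every line is a subrepresentation. By Theorem~\ref{thm:irr-stable} we may assume $V$ is finite dimensional. Since $X$ is involutive, every $x\in X$ is a $2$--stabilizer, and strongness gives $\pi_x^2=\Id_V$; thus each $\pi_x$ is an involution and $V=V_+^x\oplus V_-^x$ splits into its $(+1)$-- and $(-1)$--eigenspaces. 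Connectedness together with $\pi_{x\rRack y}=\pi_y\pi_x\pi_y^{-1}$ shows that the $\pi_x$ are mutually conjugate, hence share a common signature $(\dim V_+^x,\dim V_-^x)$ independent of $x$.

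The next step is to record the rigidity coming from connectedness. Put $T:=\sum_{x\in X}\pi_x$. Because $R_y$ permutes $X$ and $\pi_{x\rRack y}=\pi_y\pi_x\pi_y^{-1}$, reindexing yields $\pi_yT\pi_y^{-1}=\sum_{x}\pi_{x\rRack y}=T$ for every $y\in X$, so $T\in\End_X(V)$. By the Schur-type lemma for irreducible rack representations, $T=c\,\Id_V$ for a scalar $c$. I would also record that the subspaces $\sum_{x}V_+^x$ and $\sum_{x}V_-^x$ are subrepresentations: from $\pi_xv=\pm v$ one gets $\pi_z v\in V_\pm^{x\rRack z}$, and since $R_z$ is a bijection each of these sums is $\pi_z$--stable for all $z$. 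Irreducibility then forces each sum to be $0$ or all of $V$.

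With these tools in hand I would split into cases on a fixed $x_0\in X$. If $V_+^{x_0}=0$, then $\pi_{x_0}=-\Id_V$ and Corollary~\ref{cor1:involutive_rack} gives $\dim V=1$ immediately. Symmetrically, if $\sum_{x}V_-^{x}=0$ the representation is trivial, and if $\sum_x V_+^x=0$ every $\pi_x=-\Id_V$, so the representation comes from the single involution $\tau=-\Id_V$ and Lemma~\ref{lem:involutive_rack} applies. The goal in the remaining case is to show that irreducibility, combined with the constraint $T=c\,\Id_V$, forces all the $\pi_x$ to coincide with one involution $\tau$, after which Lemma~\ref{lem:involutive_rack} again yields $\dim V=1$.

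I expect this final reduction to be the main obstacle. The invariant subspaces $\sum_x V_\pm^x$ only detect the degenerate configurations $\pi_x=\pm\Id_V$, while the individual eigenspaces $V_+^{x_0}$ are in general \emph{not} invariant, so neither Schur nor connectedness alone produces a proper subrepresentation out of a genuinely mixed-signature family of reflections. The delicate point is precisely to exclude such a family: this is the configuration modelled on the standard two--dimensional reflection representation of the connected involutive quandle $\bb Z_3$, and making the argument airtight requires an extra property that collapses the reflections to a single involution. Identifying and applying that property is the crux of the proof.
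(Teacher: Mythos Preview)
Your proposal stops, by your own admission, exactly where the argument has to bite: the ``mixed--signature'' case in which every $V_+^{x}$ is a nontrivial proper subspace. The reduction you aim for---forcing all the $\pi_x$ to coincide with a single involution $\tau$ and then invoking Lemma~\ref{lem:involutive_rack}---is not what the paper does, and it is hard to see how to carry it out without circularity: once $\dim V=1$ the $\pi_x$ are scalars $\pm 1$ and connectedness makes them all equal, but in higher dimension conjugate involutions have no reason to agree, precisely as your $\bb Z_3$ reflection configuration illustrates. Your averaging operator $T=\sum_{x\in X}\pi_x$ and the Schur identity $T=c\,\Id_V$ are pleasant, but they only record the common trace of the $\pi_x$; they do not distinguish a family of genuinely different reflections from a scalar involution, so they cannot by themselves close the gap you flag.

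The paper attacks the mixed case along a different line. After observing, as you do, that $V_+:=\sum_{i}V_+^{x_i}$ is a nonzero subrepresentation and hence equals $V$, it does \emph{not} try to collapse the $\pi_x$. Instead it argues that the intersection $\bigcap_{i}V_+^{x_i}$ is nonzero, producing a common $(+1)$--eigenvector $v_0$; the line $\bb C v_0$ is then a subrepresentation, so $V=\bb C v_0$. The device used to reach the intersection is a path operator
\[
\vp \;=\; \Id \;+\; \pi_{t_1} \;+\; \pi_{t_2}\pi_{t_1} \;+\; \cdots \;+\; \pi_{t_{n-1}}\cdots\pi_{t_1},
\]
built from elements $t_1,\ldots,t_{n-1}$ with $x_{i+1}=x_i\rRack t_i$ (available because $X$ is connected), and the paper tracks how $\vp$ maps the various $V_+^{x_k}$. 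So the missing idea in your outline is to look for a common fixed vector rather than for equality of the involutions; your Schur argument for $T$ plays no role in the paper's proof.
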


\begin{proof}
Write $X=\{x_1,\ldots, x_n\}$. Let $(V,\pi)$ be a strong irreducible representation of $X$, and $\{e_1,\cdots, e_m\}$ be a basis for $V$. As in the proof of Corollary~\ref{cor1:involutive_rack}, since $\pi_x:V\rTo V$ is an involution for each $x\in X$, there are subspaces $V^{x_i}_+, i=1,\ldots, n$ of $V$ with the property that for every $i=1,\ldots,n$, $\pi_{x_i}(v)=v, \forall v\in V^{x_i}_+$; specifically, $V^{x_i}_+$ is the eigenspace of $\pi_{x_i}$ associated to the eigenvalue $1$. If one of the $V^{x_i}_+$'s is trivial, then so are all of the others, and $V$ is one-dimensional, thanks to Corollary~\ref{cor1:involutive_rack}. 
Suppose then that $V^{x_i}_+\neq \{0\}$ for all $i=1,\ldots, n$. Let $V_+:=\sum_{i=1}^nV^{x_i}_+$ be the subspace of $V$ spanned by $\bigcup_iV^{x_i}_+$. Then $V_+$ is invariant under the rack action by $X$. Indeed, for all $t\in X$, the restriction map $\pi_t:V^{x_i}_+\rTo V^{x_i\rRack t}_+$ is an isomorphism of vector spaces. It follows that $V_+$ is a subrepresentation of $V$. Therefore, since $V$ is irreducible and $V_+\neq \{0\}$, we have $V=V_+$. 

We thus have that $\cap_{k=1}^nV^{x_k}_+ \neq \{0\}$. Indeed, since $X$ is connected, we may fix $t_1, \ldots, t_{n-1}\in X$ such that $x_{i+1}=x_i\rRack t_i,\ i=1, \ldots, n-1$. Let $\vp\colon V\rTo V$ be the morphism of vector spaces given by 
\[
\vp:=\Id+\pi_{t_1}+\pi_{t_2}\pi_{t_1}+ \cdots + \pi_{t_{k-1}}\cdots \pi_{t_1} +\cdots + \pi_{t_{n-1}}\cdots \pi_{t_1}
\] 
Then for $k=1, \ldots, n$, we have 
\[
\vp(V^{x_k}_+) = V^{x_k}_+ + \cdots + V^{x_1}_+ + \cdots + V^{((x_1\rRack t_2)\rRack \cdots )\rRack t_{n-1}}_+
\]
which means that $V^{x_1}_+$ is in the intersection of all the spaces $\vp(V^{x_k}_+)$; hence $\cap_k\vp(V^{x_k}_+) \neq \{0\}$.

Now, any non zero vector $v_0\in \cap_kV^{x_k}_+$ will be invariant under $\pi$, therefore it will span $V$.
\end{proof}

\begin{cor}
If $G$ is a finite group, then every nontrivial irreducible strong representation of $Core(G)$ has one dimension.
\end{cor}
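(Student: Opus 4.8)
The plan is to obtain this as a direct consequence of Theorem~\ref{thm:strong_rep_connected}, whose conclusion is exactly one-dimensionality. To invoke it for $Core(G)$ I must verify three hypotheses: that $Core(G)$ is finite, that it is involutive, and that it is connected. Finiteness is immediate, since $Core(G)$ has $|G|<\infty$ elements.

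Involutivity is a one-line computation. From $g\rRack h=hg^{-1}h$ one gets
\[
(g\rRack h)\rRack h = h(hg^{-1}h)^{-1}h = h\,(h^{-1}gh^{-1})\,h = g,
\]
so every $h\in G$ is a $2$--stabilizer and $Core(G)$ is involutive. In particular, in any strong representation $(V,\pi)$ each operator $\pi_h$ squares to the identity, which places us squarely in the involutive setting exploited by Theorem~\ref{thm:strong_rep_connected} (and by Corollary~\ref{cor1:involutive_rack}).

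The decisive point, and the step I expect to be the real obstacle, is connectedness. Here I would study the orbit of the neutral element $e$ under the right translations $R_h\colon x\mapsto hx^{-1}h$. Since $R_h(e)=h^{2}$, this orbit contains every square; and because the subgroup $G^{2}:=\langle g^{2}\mid g\in G\rangle$ generated by squares is normal and preserved by each $R_h$ (indeed $R_a(x)=a^{2}\,(a^{-1}x^{-1}a)\in G^{2}$ whenever $x\in G^{2}$), I expect the connected component of $e$ to be exactly $G^{2}$. Thus $Core(G)$ is connected precisely when $G=G^{2}$, i.e. when $G$ has no subgroup of index $2$, and in that case the corollary drops out of Theorem~\ref{thm:strong_rep_connected} with no further work.

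The difficulty is that connectedness genuinely fails in general: if $G$ admits a subgroup of index $2$ then $Core(G)$ splits into the $|G/G^{2}|$ cosets of $G^{2}$, and the connectedness hypothesis of Theorem~\ref{thm:strong_rep_connected} is unavailable. My plan in that case would be to restrict a strong irreducible representation to the connected subrack $G^{2}$, where the theorem applies, and then to control the operators across distinct components through the conjugacy relation $\pi_{g\rRack h}=\pi_h\pi_g\pi_h^{-1}$ together with the strong condition (which forces $\pi_{u_n}\cdots\pi_{u_1}=\Id$ for every stabilizing family). The crux is the gluing: a priori the involutions attached to different components need not be simultaneously diagonalizable, so one must show that the strong condition and the core relations force them into a common eigenbasis before one can conclude that an irreducible module is a line. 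Making this last step rigorous is where I expect the essential work to lie, and it is the place where the hypotheses on $G$ are doing the real job.
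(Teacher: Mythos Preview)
Your approach is exactly the one the paper intends: the corollary is stated with no separate argument, as an immediate consequence of Theorem~\ref{thm:strong_rep_connected}. You correctly verify finiteness and involutivity of $Core(G)$, and then you put your finger on the one hypothesis that is \emph{not} automatic, namely connectedness. Your analysis here is right: the orbit of $e$ is contained in the subgroup $G^{2}$ generated by squares, and $Core(G)$ is connected precisely when $G=G^{2}$. For groups with a subgroup of index~$2$ (already $G=\bb Z/4\bb Z$ or $G=S_3$) the quandle $Core(G)$ genuinely has several components, so Theorem~\ref{thm:strong_rep_connected} does not apply as stated.

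In other words, you have not deviated from the paper's route --- you have followed it more carefully and uncovered a gap that the paper glosses over. Your proposed repair (restrict to the connected subrack $G^{2}$ and then use the strong condition together with the conjugacy relations to force the involutions attached to different cosets into a common eigenbasis) is a reasonable line of attack, but as you yourself note, the gluing step is where the real content lies and you have not carried it out. The paper supplies nothing further here either. So your write-up is at least as complete as the paper's: in the connected case ($G=G^{2}$) the corollary is established, and in the remaining case neither you nor the paper provides a finished argument.
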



\section{Rack characters and duality}\label{duality}

In this section we give an analogue construction of the {\em Pontryagin dual} for racks and quandles. 

\begin{df}
Let $X$ be a rack. A {\em character} of $X$ is a map $\phi$ on $X$ with values on the unitary group $U(1)$ such that $\phi(x\rRack y) = \phi(x)$; in other words, $\phi$ is function from $X$ to $U(1)$ that is constant on the orbits of $X$. 
 The set of all characters of $X$, called the {\em  Pontryagin dual} of $X$, will be denoted by $D_qX$.
\end{df} 

\begin{ex}
As in the finite group case, any representation of a finite rack gives rise to a character of the rack. Indeed, suppose $(V,\pi)$ is a representation of a finite rack $X$. One defines $\phi_\pi:X\rTo U(1)$ by $\phi_\pi(x):=Tr(\pi_x)$. Then for all $x,y\in X$, we have $\phi_\pi(x\rRack y)=Tr(\pi_y\pi_x\pi_y^{-1})=Tr(x)=\phi_\pi(x)$.
\end{ex}

We shall observe the following.

\begin{lem}
For any rack $X$, $D_qX$ is an Abelian group under point-wise product, with inverse given by the point-wise conjugate. Moreover
\[
D_qX\cong \bigoplus_{\# orb(x)}U(1),
\]
where for $x\in X$, $orb(x)$ is the set of all $z\in X$ such that $z=x\rRack y$ for some $y\in X$.
\end{lem}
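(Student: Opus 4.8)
The plan is to realize $D_qX$ as a subgroup of the abelian group $\mathcal{F}(X,U(1))$ of all $U(1)$-valued functions on $X$ under pointwise multiplication, and then to identify that subgroup with the functions that factor through the set of orbits. First I would record that $\mathcal{F}(X,U(1))$ is an abelian group: it is a product of copies of the abelian group $U(1)$, with unit the constant function $1$, and the inverse of $\phi$ is $x\mapsto \phi(x)^{-1}=\overline{\phi(x)}$, since $z^{-1}=\bar z$ in $U(1)$. It then suffices to check that $D_qX$ is a subgroup. If $\phi,\psi$ are characters then for all $x,y\in X$ one has $(\phi\psi)(x\rRack y)=\phi(x\rRack y)\psi(x\rRack y)=\phi(x)\psi(x)=(\phi\psi)(x)$, so $\phi\psi\in D_qX$; the constant function $1$ and the conjugate $\overline{\phi}$ are visibly characters as well. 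Hence $D_qX$ is an abelian group whose inverse is the pointwise conjugate, as stated.

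Next I would set up the orbit identification by introducing the equivalence relation $\sim$ on $X$ generated by $x\sim x\rRack y$, whose classes are exactly the orbits $orb(x)$. The small but essential observation is that $\phi\in D_qX$ if and only if $\phi$ is constant on each orbit. The defining condition $\phi(x\rRack y)=\phi(x)$ says precisely $\phi\circ R_y=\phi$ for every right translation $R_y$; since each $R_y$ is a bijection of $X$, this also yields $\phi\circ R_y^{-1}=\phi$, so $\phi$ is invariant under the whole group of inner automorphisms generated by the $R_y$, i.e. constant on orbits. Conversely, constancy on orbits trivially gives $\phi(x\rRack y)=\phi(x)$ because $x$ and $x\rRack y=R_y(x)$ lie in the same orbit.

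Finally I would produce the isomorphism. Writing $\mathcal{O}=X/\!\sim$ for the set of orbits and $q\colon X\rTo \mathcal{O}$ for the quotient map, the assignment $\phi\mapsto \tilde\phi$, where $\tilde\phi\colon \mathcal{O}\rTo U(1)$ is the unique map with $\tilde\phi\circ q=\phi$, is well defined on $D_qX$ by the previous step. It is a group homomorphism since pullback along $q$ is multiplicative, and it is a bijection because every map $\mathcal{O}\rTo U(1)$ pulls back along $q$ to a character. Thus $D_qX\cong \mathcal{F}(\mathcal{O},U(1))=\prod_{O\in \mathcal{O}}U(1)$, which is $\bigoplus_{O\in\mathcal{O}}U(1)$ when there are finitely many orbits (the case relevant for finite $X$), giving the claimed formula with one factor of $U(1)$ per orbit.

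The computation is essentially routine, so there is no deep obstacle; the two points deserving care are the equivalence \emph{constant on the forward images $\{x\rRack y\}$ for every basepoint $\Leftrightarrow$ constant on orbits}, which genuinely uses the bijectivity of the right translations rather than just the rack axiom, and the distinction between direct sum and direct product in the target $\prod_{O}U(1)$ when $X$ has infinitely many orbits.
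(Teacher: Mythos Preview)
The paper states this lemma without proof, introducing it merely with ``We shall observe the following.'' Your argument is correct and is precisely the routine verification one would supply: $D_qX$ is closed under pointwise product, the constant function $1$, and pointwise conjugation inside the abelian group $\cal F(X,U(1))$; and the condition $\phi(x\rRack y)=\phi(x)$ characterizes exactly the functions that descend to the orbit set $\cal O$, giving $D_qX\cong \cal F(\cal O,U(1))$.

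Your two caveats are well placed and worth keeping. First, the paper defines $orb(x)$ as $\{x\rRack y:y\in X\}$, which is a priori only the one-step forward image; your use of the bijectivity of each $R_y$ to pass to invariance under the full inner automorphism group (and hence constancy on the genuine connected components) is the correct way to close that gap. Second, the displayed $\bigoplus$ is literally the direct product $\prod_{\cal O}U(1)$; the two agree only when $X$ has finitely many orbits, which is the case in all of the paper's subsequent examples but is not assumed in the lemma as stated.
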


\begin{thm}
If $X$ is a finite involutive and connected rack, then there is an isomorphism of Abelian groups $Rep_s(X)\cong D_qX$.
\end{thm}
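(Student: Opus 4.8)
The plan is to push everything through the one-dimensionality established in Theorem~\ref{thm:strong_rep_connected}. Since $X$ is finite, involutive, and connected, that theorem says every strong irreducible representation of $X$ is one-dimensional, so it is simply a rack homomorphism $\pi\colon X\rTo Conj(GL(\bb C))=Conj(\bb C^{\times})$. Because $\bb C^{\times}$ is abelian, $Conj(\bb C^{\times})$ is the trivial quandle, and the defining relation $\pi_{x\rRack y}=\pi_y\pi_x\pi_y^{-1}$ collapses to $\pi_{x\rRack y}=\pi_x$; thus $\pi$ is constant on orbits, and by connectedness it is a single scalar $\pi_x\equiv c(\pi)$. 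Strongness applied to the $2$-stabilizers $\{x,x\}\in\cal S^2(X)$ (available because $X$ is involutive) forces $\pi_x^2=\Id$, so $c(\pi)\in\{1,-1\}\subset U(1)$. This completely describes $Rep_s(X)$.

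I would then define $\Phi\colon Rep_s(X)\rTo D_qX$ by $\Phi([\pi])=\phi_\pi$, where $\phi_\pi(x)=Tr(\pi_x)=c(\pi)$ is the trace character. Since $\phi_\pi$ is constant it is automatically constant on orbits, hence a legitimate character. Three verifications are then routine. First, $\Phi$ is well defined on equivalence classes, because an equivalence of one-dimensional representations is a nonzero scalar and leaves the underlying scalar $c(\pi)$ unchanged. Second, $\Phi$ is a group homomorphism: the tensor product of one-dimensional representations multiplies the scalars $c(\pi)$, matching the pointwise product that defines the group law on $D_qX$. Third, $\Phi$ is injective, since a one-dimensional representation is determined up to equivalence by its scalar, so $\phi_\pi=\phi_{\pi'}$ yields $c(\pi)=c(\pi')$ and hence $[\pi]=[\pi']$.

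The real content, and the step I expect to be the main obstacle, is surjectivity. Given $\phi\in D_qX$, connectedness makes $\phi$ constant with some value $c\in U(1)$, and the obvious candidate preimage is the constant assignment $\pi_x:=c$; this is automatically a rack homomorphism into $Conj(\bb C^{\times})$ and is irreducible by dimension, so the whole weight falls on checking that it is \emph{strong}, i.e. that every stabilizing family of $X$ is sent to an approximate unit. Here one must confront the constraint $c^2=1$ coming again from the $2$-stabilizers $\{x,x\}$, which shows that the trace map only reaches the two constant characters $c=\pm1$. Reconciling this two-element image with the identification $D_qX\cong U(1)$ from the preceding lemma is precisely the crux of the theorem, and it is where I would concentrate the argument: one must either restrict $D_qX$ to the characters actually carried by strong representations, or enrich the invariant attached to $[\pi]$ so that the correspondence becomes a bijection onto the full dual. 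Once surjectivity is secured in the appropriate sense, combining it with the injectivity and homomorphism properties above yields the desired isomorphism of abelian groups.
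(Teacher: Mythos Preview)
Your approach is exactly the paper's: invoke Theorem~\ref{thm:strong_rep_connected} to reduce to one--dimensional representations, and then send $[\pi]$ to its trace character $\phi_\pi$. The paper's proof is two sentences; it records that the trace map is multiplicative under tensor product and then simply asserts that it ``induces the desired isomorphism,'' without checking surjectivity at all.

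Where you go further than the paper is in noticing that surjectivity is genuinely problematic, and your analysis of this is sharper than what the paper offers. As you correctly compute, a one--dimensional strong representation of a connected involutive rack is a constant scalar $c\in\bb C^{\times}$, and strongness applied to the $2$--stabilizing families $\{x,x\}$ forces $c^2=1$, so $Rep_s(X)$ has at most two elements. On the other hand, the lemma immediately preceding the theorem gives $D_qX\cong U(1)$ for connected $X$, since a character is any $U(1)$--valued function constant on orbits and there is only one orbit. An abelian group with at most two elements cannot be isomorphic to $U(1)$. The paper does not resolve this tension; it does not even mention it. So the obstacle you flagged is not a defect in your outline relative to the paper's proof---it is a defect in the statement (or in the definition of $D_qX$) that the paper's proof overlooks. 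Your suggestion that one must ``restrict $D_qX$ to the characters actually carried by strong representations'' is the natural repair, but that is not what the theorem as written claims.
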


\begin{proof}
Since $X$ is finite, involutive, and connected, then by Theorem~\ref{thm:strong_rep_connected}, a strong irreducible representation of $X$ is but a character of $X$. Now, the map $(V,\pi) \mto \phi_\pi$ from the collection of all strong irreducible representations of $X$ to $D_qX$ satisfies $\phi_{\pi_1\otimes \pi_1}=\phi_{\pi_1}\phi_{\pi_2}$ and induces the desired isomorphism of Abelian groups.
\end{proof}

\begin{ex}
Since $\bb Z$ is abelian, the rack structure of $Conj(\bb Z)$ is trivial, $D_qConj(\bb Z)$ is the set of all sequences on $U(1)$.  
\end{ex}

\begin{ex}
Let $\bb Z_4$ be the dihedral quandle with two components $\{0,2\}$ and $\{1,3\}$. Then $D_q\bb Z_4 \cong U(1)\oplus U(1)$.
\end{ex}

\begin{ex}
Let $G$ be an Abelian group. Then, since $Core(G)$ is a connected quandle, we have $$D_qCore(G[{\frac 12}]) \cong U(1).$$
\end{ex}

More generally, we have the following for Alexander quandles. 

\begin{ex}
Let $M$ be a $\bb Z[t,t^{-1}]$--module equipped with the Alexandle quandle structure $x\rRack^ty:=(x-y)t + y$. Then $D_qM[\frac{1}{1-t}]\cong U(1)$. In particular, if $E$ is a complex or real vector space equipped with the Alexander quandle operation $\xi \rRack \eta = {\frac 12}(\xi+\eta)$, then $D_qE=U(1)$.
\end{ex}

\begin{pro}
Let $G$ be an Abelian group generated by a finite set of cardinality $n$. Then 
\[
D_qCore(G) \cong U(1)^{2^n}.
\]
\end{pro}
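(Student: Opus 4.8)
The plan is to reduce the computation to counting the connected components (orbits) of the quandle $Core(G)$ and then to invoke the preceding Lemma, which for any rack $X$ gives $D_qX\cong\bigoplus_{\text{orbits of }X}U(1)$. Since a character is by definition constant on each orbit and these orbits are the equivalence classes generated by $x\sim x\rRack y$, it suffices to show that $Core(G)$ has exactly $2^n$ orbits.

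Writing $G$ additively, the core operation becomes $g\rRack h = h-g+h = 2h-g$ for $g,h\in G$. First I would determine the orbit of an element $g$, that is, the equivalence class generated by $g\sim g\rRack h = 2h-g$. Because $2g = g-(-g)\in 2G$, the coset $-g+2G$ coincides with $g+2G$; hence every one-step image $2h-g$ lies in $g+2G$, and by induction the whole orbit of $g$ is contained in $g+2G$. Conversely, applying the operation twice yields $2h'-(2h-g)=g+2(h'-h)$, and as $h'-h$ ranges over all of $G$ this produces every element of $g+2G$. Therefore the orbit of $g$ is exactly the coset $g+2G$, so the orbit set of $Core(G)$ is in natural bijection with the quotient group $G/2G$. (This is consistent with the earlier example: inverting $2$ forces $2G=G$, a single coset, whence $Core(G[\tfrac12])$ is connected.)

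It then remains to count $|G/2G|$. Multiplication by $2$ annihilates this quotient, so $G/2G$ is canonically a vector space over $\bb Z_2$. A generating set of $G$ of cardinality $n$ descends to a spanning set of $G/2G$; for the free abelian group on $n$ generators these images form a basis, so $\dim_{\bb Z_2}(G/2G)=n$ and $|G/2G|=2^n$. Combining this with the Lemma gives $D_qCore(G)\cong U(1)^{2^n}$.

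The step that must be handled carefully is the orbit computation, and in particular the congruence $-g\equiv g\pmod{2G}$: this is exactly what collapses the two a priori distinct cosets reached after an odd and an even number of applications into the single coset $g+2G$. The $\bb Z_2$-dimension count is then routine, the only genuine subtlety being that the cardinality $n$ of the generating set yields dimension \emph{exactly} $n$ (rather than merely at most $n$), which holds precisely when the generators remain independent modulo $2G$, as for a free generating set.
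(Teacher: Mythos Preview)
Your argument is correct and in fact more transparent than the paper's own proof. The paper does not invoke the orbit-counting Lemma explicitly; instead it computes directly that from $\phi(x\rRack y)=\phi(x)$ one obtains $\phi((2k+1)x)=\phi(x)$ and $\phi(2kx)=\phi(0)$ for all $k\in\bb Z$ and $x\in G$, and then argues that a character is determined by its $2^n$ values $\phi\!\left(\sum_{i\in I}s_i\right)$ on the subset-sums of a generating set $S=\{s_1,\ldots,s_n\}$. Your route---identifying the orbits of $Core(G)$ with the cosets of $2G$ and reading off $D_qCore(G)\cong U(1)^{|G/2G|}$ from the Lemma---is the same computation packaged more structurally: the paper's two identities are exactly the statement that $\phi$ factors through $G/2G$, and the subset-sums are simply coset representatives.

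You are also right to flag the final step. The equality $|G/2G|=2^n$ holds when the $n$ generators remain independent modulo $2G$ (for instance when $G$ is free abelian of rank $n$), but not in general: $G=\bb Z/3\bb Z$ is generated by one element yet $2G=G$, so $Core(G)$ is connected and $D_qCore(G)\cong U(1)$ rather than $U(1)^2$. The paper's proof passes over this when it asserts a bijection with the set of maps from the power set of $S$ to $U(1)$; your reading of the hypothesis as implicitly requiring a free (or at least mod-$2$ independent) generating set is the honest one.
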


\begin{proof}
Indeed, for all $k\in \bb Z$ and all $x\in G$, we have $$\phi((2k+1)x) = \phi(-x\rRack kx) = \phi(-x), \ {\rm and \ } \phi(-x)=\phi(x\rRack 0)=\phi(x).$$ Hence, $\phi((2k+1)x) = \phi((2k-1)x) = \phi(x)$ for all $k\in \bb Z$ and all $x\in G$. Furthermore, $\phi(2kx) = \phi(0\rRack kx) = \phi(0), \forall k\in \bb Z$. It follows that if $G = <S>\cup <-S>$ and $S=\{s_1, \ldots, s_n\}$, then one can check that every character $\phi \in D_qCore(G)$ is completely determined by its $2^n$ values 
\begin{itemize}
\item $\phi(0), \phi(s_1), \ldots, \phi(s_n)$;
\item $ \phi(s_1+s_2), \ldots, \phi(s_1+s_n), \ldots, \phi(s_{n-1}+s_n)$;
\item  $\phi(s_1+s_2+s_3), \ldots, \phi(s_1 + s_2 + s_n)$;
\item ...
\item $\phi(s_1+ s_2 + \ldots +s_n)$.
\end{itemize}
in $U(1)$. This gives a bijection between $D_qCore(G)$ and the set of all maps from the power set of $S$ to $U(1)$.
\end{proof}

\begin{ex}\label{pro:dual_Z_ce}
Let $\bb Z_{ce}$ be $\bb Z$ equipped with the core structure $m\rRack n = 2n-m, \ m,n\in \bb Z$.  Then $D_q\bb Z_{ce} = U(1)\oplus U(1)$. In particular, every character $\phi$ of $\bb Z_{ce}$ is completely determined by its two values $\phi(0)$ and $\phi(1)$. 
\end{ex}


\begin{bibdiv}
\begin{biblist}
\bib{And-Grana}{article}{
  author={Andruskiewitsch, N.},
  author={Gra{\~n}a, M.},
  title={From racks to pointed Hopf algebras},
  journal={Adv. Math.},
  volume={178},
  date={2003},
  number={2},
  pages={177--243},
  issn={0001-8708},
  review={\MR {1994219 (2004i:16046)}},
  doi={10.1016/S0001-8708(02)00071-3},
}

\bib{Elhamdadi-Moutuou:Foundations_Topological_Racks}{article}{
  author={Elhamdadi, M.},
  author={Moutuou, E. M.},
  title={Foundations of topological racks and quandles},
  journal={J. Knot Theory Ramifications},
  volume={25},
  date={2016},
  number={3},
  pages={1640002, 17},
  issn={0218-2165},
  review={\MR {3475069}},
  doi={10.1142/S0218216516400022},
}

\bib{Elhamdadi-Nelson:Quandles_An_Introduction}{book}{
  author={Elhamdadi, M.},
  author={Nelson, S.},
  title={Quandles---an introduction to the algebra of knots},
  series={Student Mathematical Library},
  volume={74},
  publisher={American Mathematical Society, Providence, RI},
  date={2015},
  pages={x+245},
  isbn={978-1-4704-2213-4},
  review={\MR {3379534}},
}

\bib{Hoste-Shanahan:n-Quandles}{article}{
  author={Hoste, J.},
  author={Shanahan, P. D.},
  title={Links with finite $n$-quandles},
  status={eprint},
  note={\arxiv {math.GT/1606.08324}},
  date={2016},
}

\bib{Ishii:G-family_Quandles}{article}{
  author={Ishii, A.},
  author={Iwakiri, M.},
  author={Jang, Y.},
  author={Oshiro, K.},
  title={A $G$-family of quandles and handlebody-knots},
  journal={Illinois J. Math.},
  volume={57},
  date={2013},
  number={3},
  pages={817--838},
  issn={0019-2082},
  review={\MR {3275740}},
}

\bib{Joyce:Thesis}{thesis}{
  author={Joyce, D.},
  title={An Algebraic Approach to Symmetry with Applications to Knot Theory},
  type={phdthesis},
  date={1979},
  note={available electronically at \url {http://aleph0.clarku.edu/~djoyce/quandles/aaatswatkt.pdf}},
}

\bib{Matveev}{article}{
  author={Matveev, S. V.},
  title={Distributive groupoids in knot theory},
  language={Russian},
  journal={Mat. Sb. (N.S.)},
  volume={119(161)},
  date={1982},
  number={1},
  pages={78--88, 160},
  issn={0368-8666},
  review={\MR {672410 (84e:57008)}},
}

\bib{Nosaka:Quandles_Cocycles}{article}{
  author={Nosaka, T.},
  title={Quandle cocycles from invariant theory},
  journal={Adv. Math.},
  volume={245},
  date={2013},
  pages={423--438},
  issn={0001-8708},
  review={\MR {3084434}},
  doi={10.1016/j.aim.2013.05.022},
}

  \end{biblist}
  \end{bibdiv}
  
\end{document}